\newtheorem{thm}{Theorem}
\newtheorem{lem}[thm]{Lemma}
\newtheorem{prop}[thm]{Proposition}
\newtheorem{exl}[thm]{Example}
\newtheorem{cor}[thm]{Corollary}
\newtheorem{defn}[thm]{Definition}
\theoremstyle{remark}
\newtheorem{remark}[thm]{Remark}
\theoremstyle{defn}
\numberwithin{thm}{section}
\newcommand{\adj}{\leftrightarrow}
\newcommand{\adjeq}{\leftrightarroweq}
\DeclareMathOperator{\id}{id}
\def\N{{\mathbb N}}
\DeclareMathOperator{\Fix}{Fix}
\newcommand{\Z}{\mathbb{Z}}
\title{Limiting Sets in Digital Topology}
\author{Laurence Boxer 
        \thanks{Department of Computer and Information Sciences,
        Niagara University, NY 14109, USA; and \newline
        Department of Computer Science and Engineering,
        State University of New York at Buffalo\newline
        email: boxer@niagara.edu}
}
\date{}
\begin{document}

\maketitle

\begin{abstract}
   Freezing sets and cold sets have been introduced 
   as part of the theory of fixed points in digital 
   topology. In this paper, we introduce a generalization of these notions, the {\em limiting set},
   and examine properties of limiting sets.
   
{\em Key words and phrases}: digital topology, 
digital image, freezing set, cold set, limiting set

MSC 2020 classification: 54H30, 54H25
\end{abstract}

\section{Introduction}
The study of freezing sets and cold sets (see
Definitions~\ref{freezeDef} and~\ref{s-cold-def})
stems from the fixed point theory of digital topology.
Freezing and cold sets were studied in papers
including~\cite{BxFpSets2,BxConvex,subsetsAnd,arbDim,consequences,coldAndFreezing}.

For a continuous function $f: (X,\kappa) \to (X,\kappa)$
and $A \subset X$, weaker restrictions on
$(A,f|_A)$ than appear in
the definitions of freezing and cold sets 
may yet result in interesting restrictions on $f$.
In the current paper, we study how restrictions
on how far $f$ can move any member of $A$,
can powerfully restrict how far $f$ can 
move any member of $X$.

\section{Preliminaries}
We use $\Z$ for the set of integers, $\N$ for the
set of natural numbers, and $\N^*$ for the set of
nonnegative integers.
Given $a,b \in \Z$, $a \le b$,
\[ [a,b]_{\Z} = \{ \, z \in \Z \mid a \le z \le b \, \}.
\]

For a finite set $X$, the notation $\#X$ represents 
the number of distinct members of $X$.

Given a function $f: X \to Y$ and $A \subset X$,
the notation $f|_A$ indicates the restriction of $f$
to $A$. Also, we will occasionally use the notation
$fx$ to abbreviate $f(x)$.

\subsection{Adjacencies}
Much of this section is quoted or paraphrased from~\cite{bs20}.

A digital image is a pair $(X,\kappa)$ where
$X \subset \Z^n$ for some $n$ and $\kappa$ is
an adjacency on $X$. Thus, $(X,\kappa)$ is a graph
with $X$ for the vertex set and $\kappa$ 
determining the edge set. Usually, $X$ is finite,
although there are papers that consider infinite $X$,
e.g., for covering spaces; we will consider infinite~$X$
in section~\ref{infX}. 
Usually, adjacency reflects some type of
``closeness" in $\Z^n$ of the adjacent points.
When these ``usual" conditions are satisfied, one
may consider the digital image as a model of a
black-and-white ``real world" digital image in which
the black points (foreground) are 
the members of $X$ and the white points 
(background) are members of $\Z^n \setminus X$.

We write $x \adj_{\kappa} y$, or $x \adj y$ when
$\kappa$ is understood or when it is unnecessary to
mention $\kappa$, to indicate that $x$ and
$y$ are $\kappa$-adjacent. Notations 
$x \adjeq_{\kappa} y$, or $x \adjeq y$ when
$\kappa$ is understood, indicate that 
$x \adj_{\kappa} y$ or $x = y$.

The most commonly used adjacencies are the
$c_u$ adjacencies, defined as follows.
Let $X \subset \Z^n$ and let $u \in \N$,
$1 \le u \le n$. Then for points
\[x=(x_1, \ldots, x_n) \neq (y_1,\ldots,y_n)=y
  \mbox{ in } X
\]
we have $x \adj_{c_u} y$ if and only if
\begin{itemize}
    \item for at most $u$ indices $i$ we have
          $|x_i - y_i| = 1$, and
    \item for all indices $j$, $|x_j - y_j| \neq 1$
          implies $x_j=y_j$.
\end{itemize}

In low dimensions, the $c_u$-adjacencies are often
denoted by the number of adjacent points a point 
can have in the adjacency. E.g.,
\begin{itemize}
\item in $\Z$, $c_1$-adjacency is 2-adjacency;
\item in $\Z^2$, $c_1$-adjacency is 4-adjacency and
      $c_2$-adjacency is 8-adjacency;
\item in $\Z^3$, $c_1$-adjacency is 6-adjacency,
      $c_2$-adjacency is 18-adjacency, and 
      $c_3$-adjacency is 26-adjacency.
\end{itemize}

In this paper, we mostly use the $c_1$ and $c_n$ adjacencies.

When $(X,\kappa)$ is understood to be a digital image under discussion,
we use the following notations. For $x \in X$,
\[ N(x) = \{  \, y \in X \,| \, y \adj_{\kappa} x \, \},
\]
\[ N^*(x) = \{ \, y \in X \,| \, y \adjeq_{\kappa} x \, \} = N(x) \cup \{ \, x \, \}.
\]

\begin{defn}
{\rm \cite{Ros79}}
Let $X \subset \Z^n$. The {\em boundary of } $X$ is
\[ Bd(X) = \{ \, x \in X \mid \mbox{there exists } y \in \Z^n \setminus X
              \mbox{ such that } x \adj_{c_1} y \, \}.
\]
\end{defn}

\begin{defn} 
\label{pathDef}
A {\em digital $\kappa$-path} $P$ in $(X,\kappa)$ from
$x \in X$ to $y \in X$ of {\em length} = $n$ is
a subset $P=\{ \, x_i \, \}_{i=0}^n$ of $X$ such that $x_0=x$, $x_{i-1} 
\adj_{\kappa} x_i$ for 
$i = 1, \ldots, n$, and
$x_n=y$.
\end{defn}

\subsection{Digitally continuous functions}
Much of this section is quoted or paraphrased from~\cite{bs20}.

We denote by $\id$ or $\id_X$ the
identity map $\id(x)=x$ for all $x \in X$.

\begin{defn}
{\rm \cite{Rosenfeld, Bx99}}
Let $(X,\kappa)$ and $(Y,\lambda)$ be digital
images. A function $f: X \to Y$ is 
{\em $(\kappa,\lambda)$-continuous}, or
{\em digitally continuous} or just {\em continuous} when $\kappa$ and
$\lambda$ are understood, if for every
$\kappa$-connected subset $X'$ of $X$,
$f(X')$ is a $\lambda$-connected subset of $Y$.
If $(X,\kappa)=(Y,\lambda)$, we say a function
is {\em $\kappa$-continuous} to abbreviate
``$(\kappa,\kappa)$-continuous."
\end{defn}

Similar notions are referred to as {\em immersions}, 
{\em gradually varied operators}, and {\em gradually varied mappings}
in~\cite{Chen94,Chen04}.

\begin{thm}
{\rm \cite{Bx99}}
\label{contThm}
A function $f: X \to Y$ between digital images
$(X,\kappa)$ and $(Y,\lambda)$ is
$(\kappa,\lambda)$-continuous if and only if for
every $x,y \in X$, if $x \adj_{\kappa} y$ then
$f(x) \adjeq_{\lambda} f(y)$.
\end{thm}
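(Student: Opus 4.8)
The plan is to prove the two implications of the biconditional separately, relying throughout on the standard digital-topology characterization of $\kappa$-connectedness of a subset $S$: that any two points of $S$ are joined by a digital $\kappa$-path lying in $S$ in the sense of Definition~\ref{pathDef}.

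For the forward direction, I would assume $f$ is $(\kappa,\lambda)$-continuous and fix $x,y \in X$ with $x \adj_{\kappa} y$. The two-point set $\{x,y\}$ is $\kappa$-connected, since $x$ and $y$ are joined by the length-one path $\{x,y\}$. Continuity then forces $f(\{x,y\}) = \{f(x),f(y)\}$ to be $\lambda$-connected. But a set of at most two points is $\lambda$-connected exactly when those points coincide or are $\lambda$-adjacent, which is precisely $f(x) \adjeq_{\lambda} f(y)$. This direction is immediate once the two-point case of connectedness is spelled out.

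For the reverse direction, I would assume the adjacency-preserving condition and take an arbitrary $\kappa$-connected subset $X' \subseteq X$; the goal is to show $f(X')$ is $\lambda$-connected. Picking any two image points $f(a),f(b)$ with $a,b \in X'$, I use $\kappa$-connectedness of $X'$ to obtain a $\kappa$-path $a = x_0 \adj_{\kappa} x_1 \adj_{\kappa} \cdots \adj_{\kappa} x_n = b$ lying in $X'$. Applying the hypothesis to each consecutive pair yields $f(x_{i-1}) \adjeq_{\lambda} f(x_i)$, so the sequence $f(x_0),\ldots,f(x_n)$ is a $\lambda$-walk in $f(X')$ from $f(a)$ to $f(b)$.

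The one point requiring care — and the main, if minor, obstacle — is that $\adjeq_{\lambda}$ permits $f(x_{i-1}) = f(x_i)$, so the image sequence need not be a path in the strict sense of Definition~\ref{pathDef}. I would handle this by deleting repeated consecutive images, which leaves a genuine $\lambda$-path from $f(a)$ to $f(b)$ inside $f(X')$ (and if all the images coincide, $f(X')$ restricted to this walk is a single point, hence trivially connected). Since $a$ and $b$ were arbitrary, $f(X')$ is $\lambda$-connected, and as $X'$ was an arbitrary $\kappa$-connected subset, $f$ is $(\kappa,\lambda)$-continuous, completing the proof.
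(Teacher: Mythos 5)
Your proof is correct. The paper does not prove Theorem~\ref{contThm} itself (it is recalled from~\cite{Bx99}), but your argument --- reducing the forward direction to the two-point connected set $\{x,y\}$, and the reverse direction to pushing a $\kappa$-path forward and collapsing consecutive repeated images into a genuine $\lambda$-path --- is the standard proof of this characterization, and your handling of the $\adjeq_{\lambda}$ subtlety is exactly the point that needs care.
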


\begin{thm}
\label{composition}
{\rm \cite{Bx99}}
Let $f: (X, \kappa) \to (Y, \lambda)$ and
$g: (Y, \lambda) \to (Z, \mu)$ be continuous 
functions between digital images. Then
$g \circ f: (X, \kappa) \to (Z, \mu)$ is continuous.
\end{thm}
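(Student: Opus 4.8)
The plan is to work directly from the definition of digital continuity, which characterizes a continuous map as one whose action preserves connected subsets. First I would fix an arbitrary $\kappa$-connected subset $X'$ of $X$ and track its image through the two maps in succession. Since $X'$ is $\kappa$-connected and $f$ is $(\kappa,\lambda)$-continuous, $f(X')$ is a $\lambda$-connected subset of $Y$. Applying the continuity of $g$ to the $\lambda$-connected set $f(X')$, the set $g(f(X'))$ is $\mu$-connected in $Z$. Finally, using the set identity $(g \circ f)(X') = g(f(X'))$, we conclude that $(g \circ f)(X')$ is $\mu$-connected. Because $X'$ was an arbitrary $\kappa$-connected subset of $X$, the definition of continuity yields that $g \circ f$ is $(\kappa,\mu)$-continuous.

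Alternatively, one could argue pointwise by way of Theorem~\ref{contThm}. Given $x \adj_{\kappa} y$ in $X$, continuity of $f$ gives $f(x) \adjeq_{\lambda} f(y)$. Splitting into the two cases $f(x) = f(y)$ and $f(x) \adj_{\lambda} f(y)$: in the first case $g(f(x)) = g(f(y))$ directly, while in the second case the continuity of $g$ gives $g(f(x)) \adjeq_{\mu} g(f(y))$. In both cases $(g \circ f)(x) \adjeq_{\mu} (g \circ f)(y)$, so Theorem~\ref{contThm} furnishes the conclusion. I would likely present the first (connectedness-based) argument as primary, since it is the more transparent and avoids a case analysis.

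I do not anticipate a genuine obstacle here: the statement is essentially the observation that a composite of maps, each preserving connectivity, again preserves connectivity, together with the elementary set identity for the image of a composition. The only point requiring the slightest care arises in the pointwise version, where one must separately allow for the possibility that $f$ collapses an adjacent pair to a single point; but this case is dispatched immediately, since $g$ sends equal inputs to equal outputs and $\adjeq_{\mu}$ already permits equality.
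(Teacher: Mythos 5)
Your proof is correct; both the connectedness-based argument and the pointwise argument via Theorem~\ref{contThm} are valid, and the case analysis in the latter is handled properly. The paper itself states this result as a citation to~\cite{Bx99} and supplies no proof, so there is nothing internal to compare against, but your primary argument is the standard one for this fact.
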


A function $f: (X,\kappa) \to (Y,\lambda)$ is
an {\em isomorphism} (called a {\em homeomorphism}
in~\cite{Bx94}) if $f$ is a continuous bijection
such that $f^{-1}$ is continuous.

For $X \in \Z^n$, the {\em projection to the $i^{th}$ coordinate}
is the function $p_i: X \to \Z$ defined by
\[ p_i(x_1, \ldots, x_n) = x_i.
\]

We denote by $C(X,\kappa)$ the set of $\kappa$-continuous
functions $f: X \to X$.

\begin{defn}
{\rm (\cite{Bx99}; see also \cite{Khalimsky})}
\label{htpy-2nd-def}
Let $X$ and $Y$ be digital images.
Let $f,g: X \rightarrow Y$ be $(\kappa,\kappa')$-continuous functions.
Suppose there is a positive integer $m$ and a function
$h: X \times [0,m]_{\Z} \rightarrow Y$
such that

\begin{itemize}
\item for all $x \in X$, $h(x,0) = f(x)$ and $h(x,m) = g(x)$;
\item for all $x \in X$, the induced function
      $h_x: [0,m]_{\Z} \rightarrow Y$ defined by
          \[ h_x(t) ~=~ h(x,t) \mbox{ for all } t \in [0,m]_{\Z} \]
          is $(c_1,\kappa')-$continuous. Thus, $h_x$ is a path in $Y$.
\item for all $t \in [0,m]_{\Z}$, the induced function
         $h_t: X \rightarrow Y$ defined by
          \[ h_t(x) ~=~ h(x,t) \mbox{ for all } x \in  X \]
          is $(\kappa,\kappa')-$continuous.
\end{itemize}
Then $h$ is a {\em digital $(\kappa,\kappa')-$homotopy between} $f$ and
$g$, and $f$ and $g$ are {\em digitally $(\kappa,\kappa')-$homotopic in} $Y$.
\end{defn}

If $f \in C(X,\kappa)$ such that $f|_{f(X)} = \id_{f(X)}$,
then $f$ is a {\em $(\kappa-)$retraction of $X$}
and $f(X)$ is a {\em $(\kappa-)$retract of $X$}.

\subsection{Digital paths and simple closed curves}

By Definition~\ref{pathDef} and
Theorem~\ref{contThm},
$\{ \, x_i \, \}_{i=0}^n  \subset X$  is a 
$\kappa$-path from $x$ to $y$ if and only if the function
$f: [0,n]_{\Z} \to X$ given by $f(i) = x_i$ 
is $(c_1,\kappa)$-continuous. Such a 
function $f$ is also
called a {\em $\kappa$-path} 
from $x$ to $y$.

If $i \neq j$ implies $x_i \neq x_j$ in a digital path $P$ in $(X,\kappa)$,
we say $P$ is an {\em arc} from $x \in X$ to $y \in X$.

\begin{defn}
{\rm \cite{Han05}}
Let $(X,\kappa)$ be a connected digital image. The 
{\em shortest path metric} for $(X,\kappa)$ is
\[ d_{(X,\kappa)}(x,y) = \min\{length(P) ~|~ P 
   \mbox{ is a $\kappa$-arc in $X$ from $x$ to $y$}\},
   \mbox{ for } x,y \in X.
\]
\end{defn}

Let
\[ N^*(x,m) = \{ \, y \in X \mid d_{(X,\kappa)}(x,y) \le m
            \, \}
\]
Notice $N^*(x,1) = N^*(x)$.

Let $(X,\kappa)$ be a finite connected digital image. 
The {\em diameter of } $(X,\kappa)$ is
\[ diam(X,\kappa) = \max \{ \, d_{(X,\kappa)}(x,y) \mid
     x,y \in X \, \}.
\]

A {\em digital simple closed curve} of $n$ points is a digital image
$(C_n,\kappa)$ such that $C_n = \{ \, c_i \, \}_{i=0}^{n-1}$,
where $c_i \adj_{\kappa} c_j$ if and only if $i = (j \pm 1) \mod n$.
An indexing that satisfies these properties is a 
{\em circular ordering} or {\em circular indexing}.
Often, we require $n \ge 4$. $C_n$ may also be
called a {\em cycle on $n$ points}.

Let $q$ and $q'$ be distinct members of $C_n$. 
These points determine distinct arcs in $C_n$ from $q$
to $q'$. If one of these arcs is shorter than the 
other, the former is the
{\em unique shortest arc in $C_n$ from $q$ to $q'$}.

\subsection{Freezing sets}
\label{freezeDefSec}
\begin{defn}
{\rm \cite{BxFpSets2}}
\label{freezeDef}
Let $(X,\kappa)$ be a digital image. We say
$A \subset X$ is a {\em freezing set for $(X,\kappa)$}
if given $f \in C(X,\kappa)$, $A \subset \Fix(f)$ implies
$f=\id_X$. We say a freezing set $A$ is {\em minimal} if no
proper subset of $A$ is a freezing set for $(X,\kappa)$.
\end{defn}

We recall the following.

\begin{prop}
{\rm \cite{bs20}}
\label{uniqueShortest}
Let $(X,\kappa)$ be a digital
image and $f \in C(X,\kappa)$.
Suppose $x,x' \in \Fix(f)$ are
such that there is a unique
shortest $\kappa$-path $P$ in $X$ 
from $x$ to $x'$. Then
$P \subset \Fix(f)$.
\end{prop}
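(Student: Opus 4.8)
The plan is to exploit the fact that a continuous self-map is non-expansive for the shortest-path metric, and then to pin down the image of $P$ by a length-counting argument. First I would record the non-expansiveness: for any $a,b \in X$ and any $\kappa$-arc $Q$ from $a$ to $b$ of length $\ell$, Theorem~\ref{contThm} guarantees that consecutive vertices of $Q$ map under $f$ to $\adjeq_\kappa$-related points, so $f(Q)$ is a $\kappa$-walk from $f(a)$ to $f(b)$ traversing at most $\ell$ transitions; extracting an arc from this walk gives $d_{(X,\kappa)}(f(a),f(b)) \le d_{(X,\kappa)}(a,b)$.

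Now write $P = \{x_i\}_{i=0}^{d}$ with $x_0 = x$, $x_d = x'$, and $d = d_{(X,\kappa)}(x,x')$, so that $x_{i-1} \adj_{\kappa} x_i$ for each $i$. Since $x,x' \in \Fix(f)$, we have $f(x_0) = x$ and $f(x_d) = x'$. I would then consider the sequence $f(x_0), f(x_1), \ldots, f(x_d)$. By Theorem~\ref{contThm} each consecutive pair is $\adjeq_{\kappa}$-related, so this sequence is a $\kappa$-walk from $x$ to $x'$ in which each of the $d$ transitions is either an edge or a repetition (the case $f(x_{i-1}) = f(x_i)$).

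The crux, and the step I expect to be the main obstacle, is to argue that none of these transitions can be a repetition and that no vertex is repeated. If $s$ of the transitions were repetitions, then deleting those stalls would yield a $\kappa$-walk from $x$ to $x'$ using only $d - s$ edges; and if any vertex still recurred, a further shortcut would shorten the walk yet more. In either case this would produce a $\kappa$-path from $x$ to $x'$ of length strictly less than $d$, contradicting $d_{(X,\kappa)}(x,x') = d$. Hence $s = 0$ and no vertex repeats, so $f(x_0), f(x_1), \ldots, f(x_d)$ is itself a shortest $\kappa$-arc from $x$ to $x'$.

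Finally I would invoke uniqueness. Since $P$ is the unique shortest $\kappa$-path from $x$ to $x'$, and the image arc $f(x_0), \ldots, f(x_d)$ is also a shortest $\kappa$-path from $x$ to $x'$, the two must coincide vertex-by-vertex in the induced ordering; as both begin at $x$ and end at $x'$, this forces $f(x_i) = x_i$ for every $i$. Therefore $P \subset \Fix(f)$, completing the argument.
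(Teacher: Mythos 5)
The paper states Proposition~\ref{uniqueShortest} without proof, citing~\cite{bs20}, and your argument is correct and essentially the standard one from that reference: continuity makes the image sequence $f(x_0),\ldots,f(x_d)$ a walk of length at most $d=d_{(X,\kappa)}(x,x')$ between the fixed endpoints, minimality of $d$ rules out stalls and repeated vertices so that the image is a shortest arc, and uniqueness identifies it with $P$. The only step you state a bit tersely is the final ``coincide vertex-by-vertex'' claim; it deserves the one-line justification that a shortest path has no chords (otherwise one could shortcut), so the only length-$d$ arc from $x$ to $x'$ through the vertex set of $P$ traverses $P$ in order, which is what forces $f(x_i)=x_i$ for each $i$.
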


\begin{thm}
{\rm \cite{BxFpSets2}}; corrected proof in~{\rm \cite{arbDim}}
\label{cornersFreeze}
Let $X = \Pi_{i=1}^n [r_i,s_i]_{\Z}$.
Let $A = \Pi_{i=1}^n \{r_i,s_i\}$.
\begin{itemize}
\item Let $Y = \Pi_{i=1}^n [a_i,b_i]_{\Z}$ be
      such that $[r_i,s_i] \subset [a_i,b_i]_{\Z}$ for all $i$. 
      Let $f: X \to Y$ be $c_1$-continuous. If 
      $A \subset \Fix(f)$, then $X \subset \Fix(f)$.
\item $A$ is a freezing set for $(X,c_1)$ that is minimal for 
      $n \in \{1,2\}$.
\end{itemize}
\end{thm}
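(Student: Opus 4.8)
The plan is to prove the first (implication) bullet directly by exploiting the fact that on a box the shortest-path metric coincides with the $L^1$ (Manhattan) metric, and then to read off the freezing and minimality assertions from it. First I would record two preliminary facts. (i) For a box $B=\Pi_{i=1}^n[\alpha_i,\beta_i]_{\Z}$ with the $c_1$-adjacency, a coordinate-by-coordinate monotone walk is a shortest arc, so $d_{(B,c_1)}(u,v)=\sum_{i}|u_i-v_i|$. (ii) Any $c_1$-continuous $g$ is non-expansive for this metric: if $u\adj_{c_1}v$ then $g(u)\adjeq_{c_1}g(v)$ by Theorem~\ref{contThm}, so adjacent points have images at distance $\le 1$, and concatenating the images along a shortest path from $u$ to $v$ gives $\sum_i|g(u)_i-g(v)_i|\le\sum_i|u_i-v_i|$. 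Since $X\subset Y$ are both boxes, these distances are given by the same formula whether computed in $X$ or in $Y$.

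Now fix $x\in X$, write $y=f(x)$, and set $d_i=y_i-x_i$. Applying (ii) against the two opposite fixed corners $(r_1,\dots,r_n)$ and $(s_1,\dots,s_n)$ and adding the resulting inequalities forces the triangle inequality $|y_i-r_i|+|y_i-s_i|\ge s_i-r_i$ to be an equality in every coordinate, whence $r_i\le y_i\le s_i$; that is, $f(X)\subset X$. With $y\in X$ in hand I would rerun the same estimate against an arbitrary corner $c\in A$: writing $\sigma_i(c)=+1$ when $c_i=r_i$ and $\sigma_i(c)=-1$ when $c_i=s_i$, the inequality $\|y-c\|_1\le\|x-c\|_1$ collapses to $\sum_i\sigma_i(c)\,d_i\le 0$. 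As $c$ runs over $A$ the sign vector $\sigma(c)$ runs over all of $\{\pm1\}^n$, so choosing $\sigma_i=\operatorname{sgn}(d_i)$ (taking $+1$ where $d_i=0$) yields $\sum_i|d_i|\le 0$; hence $d=0$ and $f(x)=x$. This proves the first bullet.

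The freezing claim is then immediate: taking $Y=X$ (i.e. $a_i=r_i$, $b_i=s_i$), the first bullet says $A\subset\Fix(f)$ forces $X\subset\Fix(f)$, i.e. $f=\id_X$, so $A$ is a freezing set for $(X,c_1)$. For minimality it suffices, by upward monotonicity of the freezing property (fixing a larger set is a stronger hypothesis), to show that for each corner $c\in A$ the set $A\setminus\{c\}$ is not freezing, i.e. to exhibit a non-identity $g\in C(X,c_1)$ fixing $A\setminus\{c\}$. For $n=1$, $A=\{r_1,s_1\}$ and the constant maps $g\equiv r_1$ and $g\equiv s_1$ suffice. For $n=2$, by the symmetries of the rectangle it is enough to treat $c=(s_1,s_2)$; I would take $g$ equal to $\id_X$ except $g(s_1,s_2)=(s_1-1,s_2-1)$. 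The only adjacencies to verify are those of the altered point with its two $c_1$-neighbors $(s_1-1,s_2)$ and $(s_1,s_2-1)$, and each such neighbor's image is $c_1$-adjacent to $(s_1-1,s_2-1)$, so $g$ is continuous; it fixes the other three corners and moves $(s_1,s_2)$, so $A\setminus\{c\}$ is not freezing.

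I expect the non-expansiveness reduction to render the first bullet routine once it is set up; the genuinely delicate points are (a) the sign-vector argument, where the key realization is that the $2^n$ corners supply exactly the $2^n$ sign patterns needed to force $\sum_i|d_i|\le 0$, and (b) the continuity check for the folding map $g$ in the $n=2$ minimality step, where a one-coordinate perturbation of the corner would fail continuity against the other neighbor, whereas the diagonal perturbation $(s_1,s_2)\mapsto(s_1-1,s_2-1)$ succeeds.
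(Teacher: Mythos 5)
Your proof is correct. Note that the paper itself does not reprove Theorem~\ref{cornersFreeze}; it is quoted from the literature, where the argument (in the corrected version) proceeds by ``pulling'' lemmas: one shows coordinate-by-coordinate that if $p_i(f(q)) > p_i(q)$ for some $q$, then the same strict displacement propagates along the $i$-th coordinate direction until it reaches a face and ultimately a corner of $X$, contradicting $A \subset \Fix(f)$; tools such as Proposition~\ref{uniqueShortest} (fixedness of unique shortest paths between fixed points) also enter. Your route is genuinely different: you observe that on a box the shortest-path metric is the $L^1$ metric, that $c_1$-continuous maps are non-expansive for it, and that the $2^n$ corners realize all $2^n$ sign patterns, so testing $\|f(x)-c\|_1 \le \|x-c\|_1$ against the corner whose sign vector matches $\operatorname{sgn}(f(x)-x)$ forces $\|f(x)-x\|_1 \le 0$. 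This is clean, handles the $Y \supset X$ case uniformly (your first step, pinning $f(X) \subset X$ via the two opposite corners, is exactly what is needed there), and is in fact closer in spirit to the proof of Theorem~\ref{c1Rectangles} in this paper than to the original freezing-set proof; indeed your argument essentially gives the $(A,m,m)$-limited statement for all $n$, not just the $m=0$ case. The price is that the pulling-lemma approach localizes the contradiction and adapts to non-box images, whereas yours leans on the global $L^1$ identity for the metric, which is special to boxes. Your minimality arguments for $n\in\{1,2\}$ (constant maps for $n=1$; the diagonal corner perturbation $(s_1,s_2)\mapsto(s_1-1,s_2-1)$ for $n=2$, together with the observation that supersets of freezing sets are freezing) are the standard ones and are verified correctly; the only unstated hypothesis, shared with the original statement, is non-degeneracy $r_i<s_i$, without which minimality fails trivially.
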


\begin{thm}
\label{3ptsForCycles}
{\rm \cite{BxFpSets2}}
Let $n > 4$. Let $x_i,x_j,x_k$ be
distinct members of $C_n$ be such
that $C_n$ is a union of unique shorter arcs
determined by pairs of these points.
Let $f \in C(C_n,\kappa)$.
Then $f=\id_{C_n}$ if and only if
$\{x_i,x_j,x_k\} \subset \Fix(f)$; i.e.,
$\{x_i,x_j,x_k\}$ is a freezing set for $C_n$.
Further, this freezing set is minimal.
\end{thm}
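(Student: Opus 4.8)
The plan is to split the biconditional into its two directions and then treat minimality separately. The "only if" direction is immediate: if $f=\id_{C_n}$, then every point of $C_n$, in particular $x_i,x_j,x_k$, lies in $\Fix(f)$.

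For the "if" direction, I would first unpack the hypothesis. The three distinct points $x_i,x_j,x_k$ partition $C_n$ into three consecutive arcs whose lengths $a,b,c$ satisfy $a+b+c=n$. The requirement that $C_n$ be the union of the unique shorter arcs determined by the three pairs is equivalent to asking that each of $a,b,c$ be strictly less than $n/2$: for then the shorter arc joining each pair is precisely one of these three ``gap'' arcs, each is the \emph{unique} shortest $\kappa$-path between its endpoints (the complementary arc being strictly longer), and the three gap arcs together exhaust $C_n$. Assuming $\{x_i,x_j,x_k\}\subset\Fix(f)$, I would apply Proposition~\ref{uniqueShortest} to each of the three pairs: since each pair is joined by a unique shortest $\kappa$-path, namely its gap arc, that arc lies in $\Fix(f)$. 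Taking the union over the three pairs gives $C_n\subset\Fix(f)$, hence $f=\id_{C_n}$.

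For minimality, I must show that no two-element subset of $\{x_i,x_j,x_k\}$ is a freezing set, which then rules out every proper subset. Fix a pair, say $\{x_i,x_j\}$ obtained by deleting $x_k$. These two points cut $C_n$ into a short arc $S$ of length $m<n/2$ and a long arc $L$ of length $n-m>n/2>2$, so $n-m\ge 3$ (using $n>4$). I would exhibit a retraction $r\colon C_n\to C_n$ onto $S$ that fixes $S$ pointwise and folds $L$ onto $S$: concretely, let $r$ be the identity on $S$ and on $L$ let it follow a $(c_1,\kappa)$-continuous path lying in $S$ that starts at $x_i$, ends at $x_j$, and uses the $n-m$ available steps, absorbing the surplus of $n-2m>0$ steps by a back-and-forth zigzag and inserting a single repeated value if a parity adjustment is needed. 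The resulting map is continuous by Theorem~\ref{contThm}, fixes both $x_i$ and $x_j$, yet is not $\id_{C_n}$ since it moves the interior of $L$ into $S$. Running the same construction for the pairs obtained by deleting $x_i$ and by deleting $x_j$ shows each of the three pairs is non-freezing, establishing minimality.

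The main obstacle I anticipate is the minimality argument, specifically producing the fold map cleanly: one must check continuity across the junction points $x_i,x_j$ (which is automatic, since $r$ fixes $S$ and sends $L$ into $S$) and confirm that a continuous folding path of the required length always exists, which is exactly where the inequality $n-m>m$ and the parity remark do the work. The two directions of the biconditional, by contrast, reduce to a direct invocation of Proposition~\ref{uniqueShortest} once the hypothesis is recognized as the statement that all three gaps are shorter than $n/2$.
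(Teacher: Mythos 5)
The paper does not prove Theorem~\ref{3ptsForCycles}; it is recalled from~\cite{BxFpSets2} and used as a black box, so there is no in-paper proof to compare against. Judged on its own, your argument is correct and complete, and it uses exactly the tools this paper makes available. The unpacking of the hypothesis is right: if some gap arc had length $\ge n/2$, its interior (nonempty since $n>4$ forces that length to be at least $3$) could not be covered by any unique shorter arc of the three pairs, so the hypothesis does force all three gap lengths below $n/2$; consequently each gap arc is the unique shortest $\kappa$-path between its endpoints and Proposition~\ref{uniqueShortest} freezes each of the three arcs, hence all of $C_n$. For minimality, your reduction to the three two-element subsets is valid (any non-freezing set has only non-freezing subsets), and the fold of the long arc $L$ onto the short arc $S$ works: a $(c_1,\kappa)$-continuous walk of length $n-m>m$ from $x_i$ to $x_j$ inside $S$ always exists because digital continuity permits repeated values, which disposes of the parity issue you flag; continuity of the glued map follows from Theorem~\ref{contThm} since the only vertices adjacent to points of both pieces are $x_i$ and $x_j$, which are fixed. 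The only stylistic remark is that mapping $L$ into $S$ is slightly more than needed --- any continuous non-identity map fixing $x_i$ and $x_j$ would do --- but the explicit fold is the standard and cleanest witness.
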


\section{$n$-limited sets}
In this section, we introduce limiting sets and
explore some of their basic properties.

\subsection{Definition and general properties}
\label{defAndGeneralProps}
Freezing sets and $s$-cold sets (the latter defined 
below) motivate this work.

\begin{defn}
\label{s-cold-def}
{\rm \cite{BxFpSets2}}
Given $s \in \N^*$, we say $A \subset X$ is an
{\em $s$-cold set} for the connected digital image $(X,\kappa)$
if given $g \in C(X,\kappa)$ such that
$g|_A = \id_A$, then for all $x \in X$,
$d_{(X,\kappa)}(x,g(x)) \le s$.
A {\em cold set} is a $1$-cold set.
\end{defn}

The notion of an $s$-cold set 
generalizes that of the freezing set, since a freezing set is 0-cold.
Next, we introduce a generalization of a cold set.

\begin{defn}
\label{n-map}
Let $X$ be a $\kappa$-connected digital image. 
Let $f \in C(X,\kappa)$ 
and let $A \subset X$. Let $m,n \in \N^*$.
If for all $x \in A$ we have 
$d_{(X,\kappa)}(x,f(x)) \le n$, we say
$f|_A$ is an $n$-map. We say $f$ is an $n$-map 
if $f|_X$ is an $n$-map.
If for all $f \in C(X,\kappa)$, $f|_A$ being 
an $m$-map implies $f$ is an $n$-map, then $A$ is an
{\em $(m,n)$-limiting set for} $(X,\kappa)$
and $(X,\kappa)$ is {\em $(A,m,n)$-limited}.
Such a set is a {\em minimal $(m,n)$-limiting set for} $(X,\kappa)$ if no proper subset $A'$ of $A$ is an
$(m,n)$-limiting set for $(X,\kappa)$.
\end{defn}

\begin{remark}
\label{elementaryProps}
The following are easily observed:
\begin{itemize}
    \item $f \in C(X,\kappa)$ is a 0-map if and only if $f = \id_X$.
          Therefore, by Definitions~\ref{freezeDef} and~\ref{n-map}, $(X,\kappa)$
          is $(A,0,0)$-limited if and only if
          $A$ is a freezing set for $(X,\kappa)$.
    \item More generally, $(X,\kappa)$
          is $(A,0,s)$-limited if and only if
          $A$ is an $s$-cold set for $(X,\kappa)$.
    \item $f \in C(X,\kappa)$ is a 1-map if and only if every
              $x \in X$ is an approximate fixed point of $f$, i.e., 
              $f(x) \adjeq_{\kappa} x$. Therefore, $(X,\kappa)$ is
              $(A,1,1)$-limited if and only if for every
              $f \in C(X,\kappa)$, $f|_A$ being a $1$-map implies
              every $x \in X$ is an approximate fixed point of $f$.
    \item If $X$ is finite and $\kappa$-connected, 
          then $X$ is $(A,m,diam(X,\kappa))$-limited
          for $0 \le m \le diam(X,\kappa)$ and every nonempty $A \subset X$.
    \item If $f \in C(X,\kappa)$ such that $f|_A$ is an
          $m$-map and $A' \subset A$, then $f|_{A'}$ is
          an $m$-map.
    \item If $(X,\kappa)$ is $(A,m_0,n_0)$-limited, 
          $0 \le m_1 \le m_0$, and $n_0 \le n_1$, then
          $(X,\kappa)$ is $(A,m_1,n_1)$-limited.
    \item If $f_i \in C(X,\kappa)$ are, respectively,
          $n_i$ maps, $i \in \{ \, 1, 2 \, \}$, then
          $f_2 \circ f_1$ is an $(n_1 + n_2)$-map.
\end{itemize}
\end{remark}

\begin{exl}
$\{0\}$ is a minimal $(0,1)$-limiting set for 
$([0,1]_{\Z}, c_1)$, but is not $(0,0)$-limiting.
\end{exl}

\begin{proof}
Both assertions follow from the observation that if
$f: [0,1]_{\Z} \to [0,1]_{\Z}$ is the function $f(x) = 0$, then
$f \in C([0,1]_{\Z}, c_1)$.
\end{proof}

Perhaps the significance of the property of being $(A,m,n)$-limited can be
understood as follows. If $n$ is much smaller than $diam(X,\kappa)$ and
$f|_A$ is an $m$-map, then $(X,\kappa)$ being $(A,m,n)$-limited implies
$f$ is an $n$-map, so $f$ does not move any
point of $X$ by very much. Perhaps as a consequence,
$X$ and $f(X)$ will resemble each other, although 
such a conclusion will admit subjective exceptions.

The following generalizes the fact (that one can
deduce from the second part of Theorem~\ref{cornersFreeze})
that the set of endpoints of a digital interval is
a freezing set for the interval.

\begin{prop}
\label{intervalLimitation}
Let $X = [a,b]_{\Z}$. Let $m \in \N^*$. Let
$A = \{ \, a,b \, \}$. Then $(X,c_1)$ is
$(A, m, m)$-limited. Further, $A$ is minimal if and
only if $b-a > m$.
\end{prop}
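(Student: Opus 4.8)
The plan is to reduce both claims to a single structural fact about $c_1$-continuous self-maps of an interval. By Theorem~\ref{contThm}, if $f \in C([a,b]_\Z,c_1)$ and $x,x+1 \in [a,b]_\Z$, then $f(x+1)\adjeq_{c_1} f(x)$, i.e. $|f(x+1)-f(x)| \le 1$. Since the shortest path metric here is $d(x,y)=|x-y|$, I would introduce the displacement $h(x) := f(x)-x$ and observe that $h(x+1)-h(x) = f(x+1)-f(x)-1 \in \{-2,-1,0\}$. Hence $h$ is non-increasing on $[a,b]_\Z$, so it attains its maximum at $a$ and its minimum at $b$: for every $x$ we have $h(b) \le h(x) \le h(a)$.

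For the first assertion, suppose $f|_A$ is an $m$-map, so $|f(a)-a|\le m$ and $|f(b)-b|\le m$. In particular $h(a)=f(a)-a \le m$ and $h(b)=f(b)-b \ge -m$, and the monotonicity bound then gives $-m \le h(x) \le m$ for all $x$, that is $d(x,f(x))=|f(x)-x|\le m$. Thus $f$ is an $m$-map, and $(X,c_1)$ is $(A,m,m)$-limited.

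For minimality I would treat the two directions separately. If $b-a>m$, it suffices to show the singletons $\{a\}$ and $\{b\}$ are not $(m,m)$-limiting (the empty set is then excluded automatically, since by the fifth item of Remark~\ref{elementaryProps} a superset of a limiting set is again limiting, so a non-limiting singleton forces its subsets to be non-limiting). The constant map $f\equiv a$ lies in $C(X,c_1)$ and satisfies $f(a)=a$, so $f|_{\{a\}}$ is an $m$-map, yet it moves $b$ by $b-a>m$ and hence is not an $m$-map; thus $\{a\}$ is not limiting, and symmetrically $f\equiv b$ shows $\{b\}$ is not limiting. Therefore $A$ is minimal. Conversely, if $b-a \le m$ then $diam(X,c_1)=b-a\le m$, so $d(x,f(x)) \le diam(X,c_1) \le m$ for every $f\in C(X,c_1)$ and every $x$; every continuous self-map is then already an $m$-map, so the proper subset $\{a\}$ is $(m,m)$-limiting and $A$ is not minimal. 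Combining the two directions yields the claimed equivalence.

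The only genuinely substantive step is the monotonicity of $h$; once that is in hand, both the limiting property and the minimality dichotomy follow from evaluating $h$ at the two endpoints and from exhibiting the two constant maps. I anticipate no real obstacle, but the one point demanding care is the bookkeeping in the minimality argument: confirming that ``$f|_A$ is an $m$-map'' is vacuous on $\emptyset$ and reduces to exactly the displacement bounds $|f(a)-a|\le m$, $|f(b)-b|\le m$ on $A$, so that the constant maps and the diameter computation align precisely with the definition of an $(m,m)$-limiting set.
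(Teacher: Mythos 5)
Your proposal is correct and follows essentially the same route as the paper: the limitedness claim rests on the non-expansive (1-Lipschitz) behavior of $c_1$-continuous self-maps of an interval, which you package as monotonicity of the displacement $h(x)=f(x)-x$ while the paper runs the equivalent telescoped inequality as a proof by contradiction, and the minimality dichotomy is handled in both via the constant maps $f\equiv a$, $f\equiv b$ and the observation that $b-a\le m$ makes every continuous self-map an $m$-map. Your explicit treatment of the empty subset via the monotonicity item of Remark~\ref{elementaryProps} is a small point the paper leaves implicit, but it changes nothing of substance.
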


\begin{proof}
Let $f \in C(X,c_1)$ such that $f|_A$ is an $m$-map. Let 
$x \in X$, $a < x < b$.
\begin{itemize}
    \item Suppose $f(x) > x + m$. Since $f(a) \le a + m$
          we have
          \[ f(x) - f(a) > x + m - (a+m) = x - a,
          \]
          which is contrary to the continuity of $f$.
    \item We obtain a similar contradiction if 
          $f(x) < x - m$.
\end{itemize}
Therefore, we must have $\mid f(x) - x \mid \le m$, so
$f$ is an $m$-map.

Let $f_a, f_b: X \to X$ be given by $f_a(x) = a$, 
$f_b(x) = b$. These are $c_1$-continuous
$(b-a)$-maps such that
$f_a|_{\{a\}}$ is a 0-map, hence an $m$-map, and
$f_b|_{\{b\}}$ is a 0-map, hence an $m$-map.

\begin{itemize}
    \item If $b-a > m$, then $A$ is minimal, since 
          $f_a$ and $f_b$ are not $m$-maps.
    \item If $diam(X,c_1) = b-a \le m$, then every 
          member of $C(X,c_1)$ is an $m$-map, so
          $A$ is not minimal.
\end{itemize}
\end{proof}

We have the following.
\begin{prop}
\label{limitedImpliesCold}
Let $(X,\kappa)$ be a connected digital image 
that is $(A,m,n)$-limited
for some $\emptyset \neq A \subset X$ and 
$0 \le m \le n$. Then $A$ is an
$n$-cold set for $(X,\kappa)$.
\end{prop}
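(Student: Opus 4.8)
The plan is to unwind the two definitions and observe that the hypothesis $g|_A = \id_A$ appearing in the $n$-cold condition forces $g|_A$ to be a $0$-map, which is in particular an $m$-map because $0 \le m$; the assumed $(A,m,n)$-limited property then delivers the conclusion directly.

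First I would take an arbitrary $g \in C(X,\kappa)$ satisfying $g|_A = \id_A$, as required to test whether $A$ is $n$-cold. For each $x \in A$ we then have $g(x) = x$, so $d_{(X,\kappa)}(x, g(x)) = 0 \le m$. By Definition~\ref{n-map}, this says precisely that $g|_A$ is an $m$-map. Next I would invoke the hypothesis that $(X,\kappa)$ is $(A,m,n)$-limited: since $g \in C(X,\kappa)$ and $g|_A$ is an $m$-map, Definition~\ref{n-map} guarantees that $g$ is an $n$-map, i.e.\ $d_{(X,\kappa)}(x, g(x)) \le n$ for all $x \in X$. Comparing with Definition~\ref{s-cold-def}, this is exactly the assertion that $A$ is an $n$-cold set for $(X,\kappa)$.

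I do not anticipate a genuine obstacle here: the result is essentially a direct consequence of the definitions, the only subtlety being the bookkeeping observation that a $0$-map is automatically an $m$-map whenever $0 \le m$. As an alternative route, one could instead derive the proposition from the monotonicity property listed in Remark~\ref{elementaryProps}, taking $m_1 = 0 \le m$ and $n_1 = n$ to conclude that $(X,\kappa)$ is $(A,0,n)$-limited, and then applying the equivalence (also in Remark~\ref{elementaryProps}) between being $(A,0,n)$-limited and being an $n$-cold set. It is worth noting that the inequality $m \le n$ plays no role in the argument; it merely reflects the natural regime in which limiting sets are of interest.
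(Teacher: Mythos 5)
Your proposal is correct and follows essentially the same argument as the paper: observe that $g|_A = \id_A$ makes $g|_A$ a $0$-map, hence an $m$-map, and then apply the $(A,m,n)$-limited hypothesis to conclude $g$ is an $n$-map. Your closing observation that $m \le n$ is never actually used is also accurate.
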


\begin{proof}
Let $f \in C(X,\kappa)$ such that $f|_A = \id_A$. Then $f|_A$ is a 
0-map, hence an $m$-map. Therefore, $f$ is an $n$-map. The assertion follows.
\end{proof}

\begin{prop}
\label{c2-coldSet}
{\rm \cite{BxFpSets2}}
Let $m,n \in \N$. Let
$X = [0,m]_{\Z} \times [0,n]_{\Z}$.
Let $A \subset Bd(X)$ be such that
no pair of $c_1$-adjacent members of $Bd(X)$ 
belong to $Bd(X) \setminus A$.
Then $A$ is a 1-cold set for $(X,c_2)$.
\end{prop}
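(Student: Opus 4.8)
The plan is to take an arbitrary $g \in C(X,c_2)$ with $g|_A = \id_A$ and show that $d_{(X,c_2)}(x,g(x)) \le 1$ for every $x \in X$. Writing $g = (g_1,g_2)$ with $g_i = p_i \circ g$, I first observe that it suffices to prove
\[ |g_1(x) - p_1(x)| \le 1 \quad\text{and}\quad |g_2(x) - p_2(x)| \le 1 \]
for all $x$: by the definition of the $c_2$-adjacency this forces $g(x) \adjeq_{c_2} x$, so the shortest-path distance is at most $1$. Thus the whole problem reduces to bounding each coordinate function of $g$ by $1$ against the identity. (Since the paper's $\N$ excludes $0$, we have $m,n \ge 1$, so each edge of the rectangle has at least two points and no degenerate cases arise.)

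Next I would extract the consequences of the density hypothesis on the boundary. If $b \in Bd(X) \setminus A$, then by hypothesis no $c_1$-adjacent member of $Bd(X)$ also lies in $Bd(X)\setminus A$, so every $c_1$-boundary-neighbour of $b$ (the two points of $Bd(X)$ perimeter-adjacent to $b$) belongs to $A$ and is therefore fixed by $g$. Since $b \adj_{c_2} b'$ for each such neighbour $b'$, Theorem~\ref{contThm} gives $g(b) \adjeq_{c_2} b'$. Reading off the coordinate that is constant along the relevant edge, this yields four boundary estimates: $g_1 \le 1$ on the left edge $\{p_1 = 0\}$, $g_1 \ge m-1$ on the right edge $\{p_1 = m\}$, $g_2 \le 1$ on the bottom edge $\{p_2 = 0\}$, and $g_2 \ge n-1$ on the top edge $\{p_2 = n\}$. (Points of $A$ satisfy these trivially, and each corner is checked against its fixed edge-neighbour, which the density condition supplies on each incident edge.)

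The core step is a one-dimensional reduction along rows and columns. For a fixed second coordinate $j$, the row $[0,m]_{\Z} \times \{j\}$ carries, under $c_2$, exactly the adjacencies of $([0,m]_{\Z},c_1)$, so $\psi_j := p_1 \circ (g|_{R_j})$ is a $(c_1,c_1)$-continuous self-map of $[0,m]_{\Z}$, and by Theorem~\ref{contThm} its consecutive values differ by at most $1$. The boundary estimates give $\psi_j(0) \le 1$ and $\psi_j(m) \ge m-1$, and telescoping the Lipschitz bound then forces $\psi_j(i) \le \psi_j(0)+i \le i+1$ and $\psi_j(i) \ge \psi_j(m)-(m-i) \ge i-1$, i.e.\ $|g_1(x)-p_1(x)| \le 1$. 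The symmetric argument applied to each column $\{c\} \times [0,n]_{\Z}$, using the top and bottom estimates, gives $|g_2(x)-p_2(x)| \le 1$. Combining the two bounds completes the reduction set up in the first paragraph, so $A$ is a $1$-cold set for $(X,c_2)$.

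I expect the only delicate part to be the boundary analysis of the second paragraph — in particular the corner points, where one must be sure the density hypothesis supplies a fixed neighbour on each of the two incident edges — together with the realization that we never need a boundary point to be exactly fixed: the weaker one-sided estimates $\psi_j(0)\le 1$ and $\psi_j(m)\ge m-1$ at the row/column endpoints already drive the telescoping. Everything else is routine.
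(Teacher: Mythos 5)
Your argument is correct. Note that the paper itself gives no proof of this proposition --- it is imported from \cite{BxFpSets2} as a quoted result --- so there is nothing in this document to compare your route against; judged on its own, your proof is complete. The three steps all check out: the reduction of $d_{(X,c_2)}(x,g(x))\le 1$ to the two coordinatewise bounds is exactly the definition of $c_2$-adjacency in $\Z^2$; the hypothesis that $Bd(X)\setminus A$ contains no $c_1$-adjacent pair of boundary points does supply, for each non-fixed boundary point, a fixed perimeter-neighbour on each incident edge (including at the corners, where the neighbour lying on the same edge is the one that yields the estimate for the coordinate constant along that edge); and the row/column telescoping from the one-sided endpoint estimates $\psi_j(0)\le 1$, $\psi_j(m)\ge m-1$ is valid because each row and column of $X$ inherits exactly the $c_1$-adjacency of a digital interval under $c_2$. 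Your observation that one never needs any boundary point to be exactly fixed --- only displaced by at most $1$ in the coordinate constant along its edge --- is the right economy and is what makes the corner cases harmless.
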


Observe that the hypothesis for testing a set $A$ for $n$-coldness,
that $f|_A$ is a 0-map, is stricter than the hypothesis for testing 
$(A,m,n)$-limitedness, that $f|_A$ is an $m$-map. Therefore, it is perhaps
not surprising that the converse of Proposition~\ref{limitedImpliesCold}
is not generally true, as shown by the following.

\begin{exl}
\label{A(0,1)notA(1,1)}
Let $X = [0,2]_{\Z}^2$. Let 
\[ A = \{ \, (0,0), (0,2), (2,0), (2,2) \, \} \subset X.
\]
By Proposition~\ref{c2-coldSet}, $A$ is 1-cold for $(X,c_2)$, i.e.,
$(X,c_2)$ is $(A,0,1)$-limited. However,
$(X,c_2)$ is not $(A,1,1)$-limited.
\end{exl}

\begin{proof}
Let $f: X \to X$ be the function (see Figure~\ref{fig:2-map})
\[ f(x,y) = \left \{ \begin{array}{ll}
   (x,y) & \mbox{if } y > 0; \\
   (x,1) & \mbox{if } (x,y) \in \{ \, (0,0), (2,0) \, \}; \\
   (1,2) & \mbox{if } (x,y) = (1,0).
   \end{array} \right .
\]
It is easily seen that $f \in C(X,c_2)$ and $f|_A$ is a 1-map, but
$f$ is not a 1-map since $d_{(X,c_2)}((1,0), f(1,0)) = 2$.
\end{proof}

\begin{figure}
    \centering
    \includegraphics{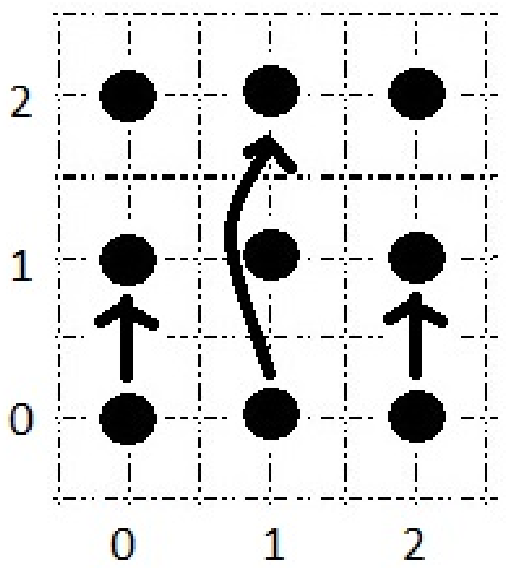}
    \caption{$X = [0,2]_{\Z}^2$. $f: X \to X$ is illustrated by
     arrows showing how points not fixed by $f$ are mapped. 
     \newline $f(0,0) = (0,1)$, ~~ $f(1,0) = (1,2)$, ~~ $f(2,0) = (2,1)$.
     \newline This is the function of Example~\ref{A(0,1)notA(1,1)}.
     One sees easily that $f \in C(X,c_2)$ and $f$ is a 2-map,
     although for the corner set $A$, a 1-cold set of $(X,c_2)$,
     $f|_A$ is a 1-map.
    }
    \label{fig:2-map}
\end{figure}

We show that being $(A,m,n)$-limited is an isomorphism invariant.
\begin{thm}
\label{invariant}
Let $F: (X,\kappa) \to (Y, \lambda)$ be an isomorphism of
connected digital images. Let
$\emptyset \neq A \subset X$. Let $m,n \in \N^*$.
If $(X,\kappa)$ is $(A,m,n)$-limited, then $(Y,\lambda)$
is $(F(A),m,n)$-limited. Further, if $A$ is a minimal
$(m,n)$-limiting set for $(X,\kappa)$, then
$F(A)$ is a minimal $(m,n)$-limiting set for 
$(Y, \lambda)$.
\end{thm}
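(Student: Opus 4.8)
The plan is to exploit the fact that an isomorphism preserves the shortest path metric exactly, and then to transport any test map on $(Y,\lambda)$ back to $(X,\kappa)$ by conjugating with $F$. First I would establish that $F$ is a graph isomorphism in the strong sense that for all $x,x' \in X$, $x \adj_\kappa x'$ if and only if $F(x) \adj_\lambda F(x')$. Since $F$ is continuous, Theorem~\ref{contThm} gives $x \adj_\kappa x' \Rightarrow F(x) \adjeq_\lambda F(x')$; because $F$ is injective, $x \ne x'$ forces $F(x) \ne F(x')$, which upgrades $\adjeq_\lambda$ to $\adj_\lambda$. Applying the same argument to the continuous bijection $F^{-1}$ yields the reverse implication. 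Consequently $F$ carries $\kappa$-arcs to $\lambda$-arcs of equal length, and conversely, so that $d_{(Y,\lambda)}(F(x),F(x')) = d_{(X,\kappa)}(x,x')$ for all $x,x' \in X$. This metric-preservation identity is the crux of the argument and is the step I expect to require the most care, since everything that follows rests on the equality being exact rather than merely up to adjacency.

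Next, to prove that $(Y,\lambda)$ is $(F(A),m,n)$-limited, let $g \in C(Y,\lambda)$ be such that $g|_{F(A)}$ is an $m$-map, and set $f = F^{-1} \circ g \circ F$; by Theorem~\ref{composition}, $f \in C(X,\kappa)$. For each $a \in A$, write $b = F(a) \in F(A)$, so that $a = F^{-1}(b)$ and $f(a) = F^{-1}(g(b))$. Applying the metric-preservation identity to the pair $F^{-1}(b), F^{-1}(g(b))$ gives $d_{(X,\kappa)}(a,f(a)) = d_{(Y,\lambda)}(b,g(b)) \le m$, so $f|_A$ is an $m$-map. Since $(X,\kappa)$ is $(A,m,n)$-limited, $f$ is an $n$-map. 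For arbitrary $y \in Y$ put $x = F^{-1}(y)$; from $F \circ f = g \circ F$ we obtain $g(y) = F(f(x))$, whence $d_{(Y,\lambda)}(y,g(y)) = d_{(X,\kappa)}(x,f(x)) \le n$. Thus $g$ is an $n$-map, which establishes the first assertion.

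For minimality I would argue by contraposition, using the symmetry between $F$ and $F^{-1}$. Suppose $F(A)$ is not a minimal $(m,n)$-limiting set for $(Y,\lambda)$, so that some proper subset $B \subsetneq F(A)$ is itself $(m,n)$-limiting for $(Y,\lambda)$. Since $F^{-1}$ is also an isomorphism, the first part of the theorem applied to $F^{-1}$ shows that $F^{-1}(B)$ is $(m,n)$-limiting for $(X,\kappa)$. As $F$ is a bijection, $B \subsetneq F(A)$ gives $F^{-1}(B) \subsetneq A$, contradicting the minimality of $A$. Hence $F(A)$ is a minimal $(m,n)$-limiting set for $(Y,\lambda)$, completing the proof.
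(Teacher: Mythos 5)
Your proof is correct and follows essentially the same route as the paper: conjugate the test map on $(Y,\lambda)$ by $F$ to get a map in $C(X,\kappa)$, use the fact that the isomorphism preserves $d_{(X,\kappa)}$ to transfer the $m$-map and $n$-map conditions back and forth, and handle minimality by applying the first part to $F^{-1}$. The only difference is that you explicitly justify the metric-preservation identity (via adjacency preservation in both directions), a step the paper uses without comment; that is a reasonable bit of added care, not a different argument.
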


\begin{proof}
Let $f \in C(Y,\lambda)$ such that $f|_{F(A)}$ is an $m$-map. 
By Theorem~\ref{composition}, $G=F^{-1} \circ f \circ F \in C(X,\kappa)$.

Let $a \in A$. Then 
\[ m \ge d_{(Y,\lambda)}(Fa, fFa) = 
   d_{(X,\kappa)}(F^{-1}Fa, F^{-1}fFa) =
   d_{(X,\kappa)}(a, Ga);
\]
i.e., $G|_A$ is an $m$-map. It follows that $G$ is an $n$-map.

Let $y = F(x)$ be an arbitrary point of $Y$. Then
\[ d_{(Y,\lambda)}(y, f(y)) = 
   d_{(X,\kappa)}(F^{-1}(y),F^{-1}(f(y))) =    \]
\[
   d_{(X,\kappa)}(F^{-1}(F(x)), F^{-1}(f(F(x)))) = 
   d_{(X,\kappa)}(x, G(x)) \le n.
\]
Thus $f$ is an $n$-map.

If $A$ is minimal, suppose $F(A)$ is not. Then
there is a proper subset $B$ of $F(A)$ such that
$B$ is an $(m,n)$-limiting set for $(Y,\lambda)$.
Then $F^{-1}(B)$ is a proper subset
of $A$ and, by the above, is 
$(m,n)$-limiting for $(X,\kappa)$.
This is a contradiction of the minimality of $A$,
which establishes that $B$ is minimal.
\end{proof}

\begin{thm}
\label{induceBy2k}
Let $\emptyset \neq A \subset X$ for a 
connected digital image
$(X,\kappa)$. Let $k \in \N^*$.
Suppose for every $x \in X$
there exists $a_x \in A$ such that $d_{(X,\kappa)}(x,a_x) \le k$.
Suppose $f \in C(X,\kappa)$. If
$f|_A$ is an $m$-map,
then $f$ is an $(m+2k)$-map.
\end{thm}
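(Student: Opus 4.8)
The plan is to bound the distance $d_{(X,\kappa)}(x,f(x))$ for an arbitrary point $x \in X$ by a triangle-inequality argument that routes through a nearby point of $A$. The shortest path metric $d_{(X,\kappa)}$ is a genuine metric on a connected digital image, so I may freely use the triangle inequality. The key additional ingredient I need is that a continuous function cannot increase distances by more than a bounded factor; specifically, for $f \in C(X,\kappa)$ and any $p,q \in X$ we have $d_{(X,\kappa)}(f(p),f(q)) \le d_{(X,\kappa)}(p,q)$. This is because if $P = \{p_i\}_{i=0}^\ell$ is a shortest $\kappa$-arc from $p$ to $q$, then by Theorem~\ref{contThm} consecutive images satisfy $f(p_{i-1}) \adjeq_\kappa f(p_i)$, so the images trace out a $\kappa$-walk of length at most $\ell$ from $f(p)$ to $f(q)$, whence the distance between the images is at most $\ell = d_{(X,\kappa)}(p,q)$.

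With that nonexpansiveness in hand, the proof is a short chain of estimates. Given $x \in X$, I would invoke the hypothesis to pick $a_x \in A$ with $d_{(X,\kappa)}(x,a_x) \le k$. Then I would expand via the triangle inequality:
\[
d_{(X,\kappa)}(x,f(x)) \le d_{(X,\kappa)}(x,a_x) + d_{(X,\kappa)}(a_x,f(a_x)) + d_{(X,\kappa)}(f(a_x),f(x)).
\]
The first term is at most $k$ by the choice of $a_x$. The middle term is at most $m$ because $f|_A$ is an $m$-map and $a_x \in A$. The third term, $d_{(X,\kappa)}(f(a_x),f(x))$, is at most $d_{(X,\kappa)}(a_x,x) \le k$ by the nonexpansiveness of $f$ noted above. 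Summing gives $d_{(X,\kappa)}(x,f(x)) \le k + m + k = m + 2k$, and since $x$ was arbitrary, $f$ is an $(m+2k)$-map.

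The one step that genuinely requires justification, rather than being a mechanical application of the triangle inequality, is the nonexpansiveness claim $d_{(X,\kappa)}(f(p),f(q)) \le d_{(X,\kappa)}(p,q)$; everything else is bookkeeping. I would therefore either state and prove this as a small preliminary lemma or fold its one-line argument directly into the proof. A minor technical point to handle carefully is that the image of a shortest arc under $f$ need not be an arc (it may repeat vertices or collapse edges, since $f(p_{i-1})$ may equal $f(p_i)$), so the image is only a \emph{walk}; but a walk of length $\ell$ still certifies that the distance between its endpoints is at most $\ell$, because one can extract a shortest arc from any walk without increasing its length. This subtlety is the only place where I expect the argument to need a sentence of care rather than a single inequality.
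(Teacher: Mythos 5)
Your proposal is correct and follows essentially the same route as the paper: the identical three-term triangle-inequality decomposition through $a_x$, bounding the terms by $k$, $m$, and $k$ respectively. The only difference is that you explicitly state and justify the nonexpansiveness of continuous maps with respect to $d_{(X,\kappa)}$, which the paper's proof uses silently; making that step explicit is a reasonable (arguably preferable) choice, not a divergence.
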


\begin{proof}
Given $x \in X$, let $f$ and $a_x$ be as described above. Then
\[ d_{(X,\kappa)}(x, f(x)) \le d_{(X,\kappa)}(x, a_x) + d_{(X,\kappa)}(a_x, f(a_x))
   + d_{(X,\kappa)}(f(a_x), f(x)) \le
\]
\[ k + m + k = m+2k.
\]
The assertion follows.
\end{proof}

\begin{cor}
\label{induceBy2k-cor}
Let $\emptyset \neq A \subset X$ for a 
connected digital image
$(X,\kappa)$. Let $k \in \Z$, $k \ge 0$.
Suppose for every $x \in X$
there exists $a_x \in A$ such that
$d_{(X,\kappa)}(x,a_x) \le k$.
Then $(X,\kappa)$ is 
$(A,m,m+2k)$-limited.
\end{cor}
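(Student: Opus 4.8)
The plan is to derive Corollary~\ref{induceBy2k-cor} as an immediate consequence of Theorem~\ref{induceBy2k} by simply checking the definition of $(A,m,n)$-limitedness. The corollary's hypothesis is identical to that of the theorem: for every $x \in X$ there exists $a_x \in A$ with $d_{(X,\kappa)}(x,a_x) \le k$. So all the geometric work is already done, and what remains is to unpack Definition~\ref{n-map} and verify the quantifier structure matches.

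First I would let $f \in C(X,\kappa)$ be arbitrary and assume $f|_A$ is an $m$-map; this is precisely the antecedent one must discharge to conclude $(A,m,m+2k)$-limitedness. Then I would invoke Theorem~\ref{induceBy2k} directly: since the standing hypothesis on $A$ and $k$ holds and $f|_A$ is an $m$-map, the theorem yields that $f$ is an $(m+2k)$-map. Because $f$ was an arbitrary continuous self-map whose restriction to $A$ is an $m$-map, Definition~\ref{n-map} is satisfied with $n = m+2k$, giving that $(X,\kappa)$ is $(A,m,m+2k)$-limited.

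There is essentially no obstacle here; the only point requiring a moment's care is the universal quantification over $f$. Definition~\ref{n-map} demands that \emph{for all} $f \in C(X,\kappa)$, $f|_A$ being an $m$-map implies $f$ is an $(m+2k)$-map, and Theorem~\ref{induceBy2k} already establishes exactly this implication for a single but arbitrary $f$. So the corollary follows by allowing $f$ to range over all of $C(X,\kappa)$. I would also note the harmless notational shift from $k \in \N^*$ in the theorem to $k \in \Z$, $k \ge 0$ in the corollary, which describe the same set of values. The proof is therefore a two-line application, and I expect it to read simply as: the hypotheses of Theorem~\ref{induceBy2k} are met, so any $f$ with $f|_A$ an $m$-map is an $(m+2k)$-map, which is the definition of $(A,m,m+2k)$-limitedness.
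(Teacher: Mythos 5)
Your proposal is correct and matches the paper's approach exactly: the paper's own proof is the one-line ``This follows from Theorem~\ref{induceBy2k},'' and your argument simply spells out the (correct) quantifier bookkeeping behind that citation.
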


\begin{proof}
This follows from Theorem~\ref{induceBy2k}.
\end{proof}

Recall $A$ is a {\em dominating set for} $(X,\kappa)$ if given $x \in X$
there exists $a \in A$ such that $x \adjeq_{\kappa} a$.

\begin{cor}
Let $A$ be a dominating set for a connected digital image
$(X,\kappa)$.
\begin{itemize}
    \item Suppose $f \in C(X,\kappa)$. If $f|_A$ is an $m$-map, 
          then $f$ is an $(m+2)$-map.
    \item If for every $f \in C(X,\kappa)$, $f|_A$ is an $m$-map, then
          $(X,\kappa)$ is $(A,m,m+2)$-limited.
\end{itemize}
\end{cor}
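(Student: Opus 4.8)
The plan is to recognize this corollary as the special case $k=1$ of the two preceding results, once the defining property of a dominating set is translated into the metric language those results use. The only genuine content is this translation, so I would open by establishing it explicitly.

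First I would observe that if $A$ is a dominating set for $(X,\kappa)$, then for every $x \in X$ there exists $a \in A$ with $x \adjeq_{\kappa} a$; by definition of $\adjeq_{\kappa}$ this means either $x = a$ or $x \adj_{\kappa} a$. In the former case $d_{(X,\kappa)}(x,a) = 0$, and in the latter case $d_{(X,\kappa)}(x,a) = 1$, since a single adjacency is a $\kappa$-arc of length $1$. In either case, setting $a_x = a$, we obtain
\[ d_{(X,\kappa)}(x, a_x) \le 1.
\]
Thus $A$ satisfies the hypothesis of Theorem~\ref{induceBy2k} and of Corollary~\ref{induceBy2k-cor} with $k = 1$.

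Having made this identification, both assertions follow immediately. For the first bullet, I would invoke Theorem~\ref{induceBy2k} with $k=1$: if $f \in C(X,\kappa)$ and $f|_A$ is an $m$-map, then $f$ is an $(m + 2\cdot 1) = (m+2)$-map. For the second bullet, I would invoke Corollary~\ref{induceBy2k-cor} with $k=1$ to conclude directly that $(X,\kappa)$ is $(A, m, m+2)$-limited.

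There is no real obstacle here; the result is a routine specialization. The one point requiring care is simply confirming that $\adjeq_{\kappa}$-adjacency yields shortest-path distance at most $1$, so that $k=1$ is the correct value of the parameter in the cited results, rather than misreading the adjacency condition as giving some larger bound.
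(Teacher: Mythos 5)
Your proposal is correct and matches the paper's own proof, which simply cites Theorem~\ref{induceBy2k} and Corollary~\ref{induceBy2k-cor} with $k=1$. Your explicit verification that the dominating-set condition gives $d_{(X,\kappa)}(x,a_x)\le 1$ is a small, accurate elaboration of the step the paper leaves implicit.
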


\begin{proof}
These assertions follow by taking $k=1$ in Theorem~\ref{induceBy2k}
and Corollary~\ref{induceBy2k-cor}.
\end{proof}

We cannot in general replace $m+2$ by $m+1$ in Theorem~\ref{induceBy2k}
or in Corollary~\ref{induceBy2k-cor}, as shown by the following.

\begin{exl}
Let $X = [-1,1]_{\Z}$. Then $A=\{ \, 0 \, \}$ dominates $(X, c_1)$ and
the function $f(x) = \, \mid x \mid$ is easily seen to be a member
of $C(X,c_1)$ such that $f|_A$ is a $0$-map, but $f$ is a $2$-map that
is not a $1$-map.
\end{exl}

\subsection{Diameter}
\begin{lem}
\label{distLemma}
Let $Y$ be a $\kappa$-connected subset of the
connected digital image $(X,\kappa)$. Let
$y_0,y_1 \in Y$. Then
\[ d_{(Y,\kappa)}(y_0,y_1) \ge d_{(X,\kappa)}(y_0,y_1).
\]
\end{lem}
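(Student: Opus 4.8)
The plan is to exploit that the shortest path metric is defined as a minimum of arc lengths, together with the monotonicity of a minimum under enlargement of the index set. First I would record that both metrics are well-defined: since $(X,\kappa)$ is connected, $d_{(X,\kappa)}(y_0,y_1)$ exists, and since $Y$ is $\kappa$-connected, there is at least one $\kappa$-arc in $Y$ from $y_0$ to $y_1$, so $d_{(Y,\kappa)}(y_0,y_1)$ exists as well.

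The key structural observation is that every $\kappa$-arc in $Y$ from $y_0$ to $y_1$ is also a $\kappa$-arc in $X$ from $y_0$ to $y_1$. Indeed, if $P = \{x_i\}_{i=0}^n \subset Y$ is such an arc, then $P \subset X$, the points $x_i$ remain distinct, and each consecutive adjacency $x_{i-1} \adj_{\kappa} x_i$ persists in $X$, because $\kappa$-adjacency of two points is an intrinsic relation on $\Z^n$ that does not depend on which subset is taken as the ambient vertex set. Hence the family of $\kappa$-arcs in $X$ from $y_0$ to $y_1$ contains the family of $\kappa$-arcs in $Y$ from $y_0$ to $y_1$ as a subfamily. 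Taking the minimum of the length function over the larger family can only yield a value no greater than the minimum over the smaller one, so
\[ d_{(X,\kappa)}(y_0,y_1) \le d_{(Y,\kappa)}(y_0,y_1), \]
which is the claim.

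I do not anticipate a genuine obstacle, as the result is essentially the monotonicity of a minimum under inclusion of index sets. The only delicate point is the verification that adjacency is preserved when passing between $Y$ and $X$; this amounts to noting that $\kappa$ (for instance a $c_u$-adjacency) is defined coordinatewise on $\Z^n$ and is therefore unaffected by the choice of vertex set, so that no arc in $Y$ is lost, gained a shortcut, or otherwise altered when viewed inside $X$. Once this is granted, the inequality is immediate.
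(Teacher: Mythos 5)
Your proposal is correct and matches the paper's argument: the paper likewise takes a shortest $\kappa$-arc $P$ in $Y$ from $y_0$ to $y_1$ and observes that $P \subset X$, so that $d_{(X,\kappa)}(y_0,y_1) \le length(P) = d_{(Y,\kappa)}(y_0,y_1)$. Your additional remarks on the well-definedness of both metrics and the persistence of adjacency are harmless elaborations of the same idea.
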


\begin{proof}
We have $d_{(Y,\kappa)}(y_0,y_1) = length(P)$, where
$P$ is a shortest $\kappa$-path in~$Y$ from $y_0$
to $y_1$. Since $P \subset X$, the assertion follows.
\end{proof}

That the inequality in Lemma~\ref{distLemma} may
be strict is shown in the following.

\begin{exl}
Let $(X, \kappa)$, $X = \{ \, p_i \, \}_{i=0}^{m}$, be a
digital simple closed curve of $m+1$ points that are
circularly indexed, $m \ge 4$.
Let $Y = X \setminus \{ \, p_0 \, \}$. Then
\[ d_{(Y,\kappa)}(p_1,p_m) = m-1 > 2 =
   d_{(X,\kappa)}(p_1,p_m).
\]
\end{exl}

\begin{thm}
\label{diamThm}
    Let $(X,\kappa)$ be a finite connected digital
    image. Let $f: X \to X$ be an $m$-map. Consider
    the inequality
    \begin{equation}
        \label{diamCompare}
        diam(f(X)) \ge diam(X) - 2m
    \end{equation} 
    Inequality~{\rm (\ref{diamCompare})} is valid if
    $diam(X)$ is computed using $d_{(X,\kappa)}$
    or using $d_{(f(X),\kappa)}$.
\end{thm}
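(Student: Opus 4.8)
The plan is to reduce both readings of the statement to a single triangle-inequality estimate and then use Lemma~\ref{distLemma} to pass between them. Since an $m$-map is by definition a member of $C(X,\kappa)$ and $X$ is $\kappa$-connected, the image $f(X)$ is a $\kappa$-connected subset of $X$; hence both the ambient diameter of $f(X)$ measured with $d_{(X,\kappa)}$ and the intrinsic diameter measured with $d_{(f(X),\kappa)}$ make sense, while $diam(X)$ is always the $d_{(X,\kappa)}$-diameter. So the ``either/or'' in the statement refers only to how $diam(f(X))$ is measured.

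First I would choose $x_0,x_1 \in X$ with $d_{(X,\kappa)}(x_0,x_1) = diam(X)$, which exist because $X$ is finite. The $m$-map hypothesis gives $d_{(X,\kappa)}(x_i,f(x_i)) \le m$ for $i \in \{0,1\}$, and the triangle inequality in $(X,\kappa)$ yields
\[ diam(X) = d_{(X,\kappa)}(x_0,x_1) \le d_{(X,\kappa)}(x_0,f(x_0)) + d_{(X,\kappa)}(f(x_0),f(x_1)) + d_{(X,\kappa)}(f(x_1),x_1). \]
Bounding the two outer terms by $m$ gives $d_{(X,\kappa)}(f(x_0),f(x_1)) \ge diam(X) - 2m$. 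Since $f(x_0),f(x_1) \in f(X)$, this already establishes the ambient reading:
\[ diam(f(X)) \ge d_{(X,\kappa)}(f(x_0),f(x_1)) \ge diam(X) - 2m, \]
where $diam(f(X))$ is computed with $d_{(X,\kappa)}$.

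For the intrinsic reading I would invoke Lemma~\ref{distLemma}, which gives $d_{(f(X),\kappa)}(u,v) \ge d_{(X,\kappa)}(u,v)$ for all $u,v \in f(X)$. Applying this to a pair realizing the ambient diameter of $f(X)$ shows that the intrinsic diameter of $f(X)$ is at least its ambient diameter, so the intrinsic bound follows immediately from the ambient one already proved. Thus one estimate covers both claims.

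I do not expect a genuine analytic obstacle here; the computation is routine. The only point requiring care is the logical shape of the statement, and the observation that makes the argument short is that the intrinsic diameter dominates the ambient diameter (a direct consequence of Lemma~\ref{distLemma}), so it suffices to bound the smaller, ambient quantity. I would also remark that when $diam(X) \le 2m$ the bound is vacuous but still correct, since diameters are nonnegative, so no case split on the sign of $diam(X) - 2m$ is needed.
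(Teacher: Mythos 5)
Your proof is correct and follows essentially the same route as the paper's: pick a diameter-realizing pair in $X$, apply the triangle inequality with the $m$-map bound on the two outer terms, and then use Lemma~\ref{distLemma} to pass from the ambient to the intrinsic diameter of $f(X)$. Your reading of the (slightly misstated) ``either/or'' clause as referring to how $diam(f(X))$ is measured matches what the paper's proof actually establishes.
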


\begin{proof}
Since $X$ is finite, $diam(X) = M < \infty$.
Therefore, there exist $x,y \in X$ and a 
$\kappa$-path~$P$ in~$X$ of length $M$ from $x$ to $y$.
We have
    \[ M = d_{(X,\kappa)}(x,y) \le d_{(X,\kappa)}(x,f(x)) + d_{(X,\kappa)}(f(x),f(y)) + d_{(X,\kappa)}(f(y),y)
    \]
    \[
    \le m + d_{(X,\kappa)}(f(x), f(y)) + m, ~~~~~\mbox{or}
\]
\[ M - 2m \le d_{(X,\kappa)}(f(x),f(y)).
\]
Using Lemma~\ref{distLemma}, it follows that
\[  M - 2m \le diam_{(X,\kappa)}(f(X)) \le 
    diam_{(f(X),\kappa)}(f(X)).
\]
\end{proof}

\subsection{Hyperspace metrics}
The {\em Hausdorff metric}~\cite{Nadler} is often used 
as a measure of how similarly
positioned two objects are in a metric space. Given nonempty subsets
$A$ and $B$ of a metric space $(X,d)$, the Hausdorff metric (based on $d$)
for the distance between $A$ and $B$, $H_d(A,B)$, is the smallest
$\varepsilon \ge 0$ such that given $a \in A$ and $b \in B$, there
exist $a' \in A$ and $b' \in B$ such that
\[ \max \{ \, d(a,b'), d(a',b) \, \} \le \varepsilon.
\]

Borsuk's {\em metric of continuity}~\cite{Borsuk54, Borsuk67} based on a metric $d$ has been
adapted to digital topology~\cite{beyondHaus} as
follows. For digital images $Y_0$ and $Y_1$ in connected
$(X,\kappa)$, the metric of continuity $\delta_d(Y_0,Y_1)$ 
is the greatest lower bound of numbers
$t>0$ such that there are $\kappa$-continuous 
$f: Y_0 \to Y_1$ and $g: Y_1 \to Y_0$ with
\[ d_{(X,\kappa)}(x, f(x)) \le t \mbox{ for all } x \in Y_0 \mbox{ and } d_{(X,\kappa)}(y, g(y)) \le t 
    \mbox{ for all } y \in Y_1.
\]

\begin{prop}
\label{HausAndContinuityMetrics}
{\rm \cite{beyondHaus}}
Given finite digital images $(X,\kappa)$ and $(Y,\kappa)$ in $\Z^n$ and a metric $d$ for $\Z^n$,
$H_d(X,Y) \le \delta_d(X,Y)$.
\end{prop}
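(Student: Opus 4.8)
The plan is to let the pair of continuous maps guaranteed by the metric of continuity serve directly as the witnesses demanded by the Hausdorff metric. Since $\delta_d(X,Y)$ is defined as a greatest lower bound, I would first fix an arbitrary $t > \delta_d(X,Y)$ and note that, by the definition of the infimum, there must exist $\kappa$-continuous maps $f : X \to Y$ and $g : Y \to X$ with $d_{(X,\kappa)}(x,f(x)) \le t$ for all $x \in X$ and $d_{(X,\kappa)}(y,g(y)) \le t$ for all $y \in Y$. At the end I will let $t$ decrease to $\delta_d(X,Y)$.

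Next I would verify the defining inequality for $H_d$. Given an arbitrary pair $a \in X$, $b \in Y$, I would choose $b' = f(a) \in Y$ and $a' = g(b) \in X$ as the required points, so that the quantity to bound is $\max\{\, d(a,f(a)),\, d(g(b),b)\,\}$. The one point requiring care---and the crux of the argument---is that the displacements supplied by $\delta_d$ are measured in the shortest-path metric $d_{(X,\kappa)}$ of the ambient connected image, whereas $H_d$ is measured in the ambient metric $d$ on $\Z^n$. I would resolve this by observing that $d(u,v) \le d_{(X,\kappa)}(u,v)$ for all $u,v$: any $\kappa$-adjacent pair of points of $\Z^n$ has $d$-distance at most $1$ for the standard $c_u$ adjacencies together with the usual ($\ell_1$, $\ell_2$, or $\ell_\infty$) metrics, so summing along a shortest $\kappa$-path and applying the triangle inequality for $d$ yields the claim.

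Combining these, $d(a,f(a)) \le d_{(X,\kappa)}(a,f(a)) \le t$ and likewise $d(g(b),b) \le t$, so $\max\{\, d(a,b'),\, d(a',b)\,\} \le t$. As $a$ and $b$ were arbitrary, the definition of the Hausdorff metric gives $H_d(X,Y) \le t$. Since this holds for every $t > \delta_d(X,Y)$, taking the infimum over such $t$ yields $H_d(X,Y) \le \delta_d(X,Y)$, as desired. I expect the only genuine subtlety to be the reconciliation of the two metrics in the middle step; the remainder is a direct unwinding of the definitions of $\delta_d$ and $H_d$.
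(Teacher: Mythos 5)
The paper does not prove this proposition; it is quoted from \cite{beyondHaus}, so there is no in-paper argument to compare against. Your overall strategy --- feeding the maps $f$ and $g$ witnessing $\delta_d(X,Y)<t$ into the definition of $H_d$ via $b'=f(a)$ and $a'=g(b)$, then letting $t$ decrease to $\delta_d(X,Y)$ --- is the right route, and it is surely the intended one.

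The step you single out as the crux, however, does not hold as stated. The inequality $d(u,v)\le d_{(X,\kappa)}(u,v)$ fails for the $\ell_1$ and $\ell_2$ metrics once $u\ge 2$ in $c_u$: in $(\Z^2,c_2)$ the points $(0,0)$ and $(1,1)$ are adjacent, so their shortest-path distance is $1$, while their $\ell_1$-distance is $2$ and their $\ell_2$-distance is $\sqrt{2}$. Worse, the proposition is asserted for an \emph{arbitrary} metric $d$ on $\Z^n$, for which no uniform comparison $d\le d_{(X,\kappa)}$ is possible (replace $d$ by $1000\,d$); and the proposition posits no ambient connected image in which $d_{(X,\kappa)}$ between a point of $X$ and a point of $Y$ would even be defined. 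The mismatch you are trying to repair is in fact a slip in this paper's transcription of the definition: in \cite{beyondHaus} the metric of continuity $\delta_d$ measures the displacements $d(x,f(x))$ and $d(y,g(y))$ in the metric $d$ itself, not in $d_{(X,\kappa)}$ --- as it must, since otherwise $\delta_d$ would not depend on $d$ at all and the proposition would be false for scaled metrics. Reading the definition that way, your witnesses give $d(a,f(a))\le t$ and $d(g(b),b)\le t$ immediately, the problematic bridging step disappears, and the rest of your argument closes the proof.
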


\begin{thm}
Let $(X,\kappa)$ be a finite connected digital image. 
Let $f \in C(X,\kappa)$ be an $m$-map. Then 
\[ H_{d_{(X,\kappa)}}(X,f(X)) \le
   \delta_{d_{(X,\kappa)}}(X,f(X)) \le m. 
\]
\end{thm}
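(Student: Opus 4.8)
The plan is to prove the two inequalities separately, since the first is an immediate citation and the second is the substantive content. The first inequality, $H_{d_{(X,\kappa)}}(X,f(X)) \le \delta_{d_{(X,\kappa)}}(X,f(X))$, follows directly from Proposition~\ref{HausAndContinuityMetrics} applied with $(Y,\kappa)$ taken to be $(f(X),\kappa)$, which is a finite digital image in $\Z^n$ since $X$ is finite and $f(X) \subset X$. So the real work is entirely in showing $\delta_{d_{(X,\kappa)}}(X,f(X)) \le m$.

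To bound the metric of continuity, I would unpack its definition: $\delta_{d}(X,f(X))$ is the greatest lower bound of those $t>0$ for which there exist $\kappa$-continuous maps $\phi: X \to f(X)$ and $\psi: f(X) \to X$ satisfying $d_{(X,\kappa)}(x,\phi(x)) \le t$ for all $x \in X$ and $d_{(X,\kappa)}(z,\psi(z)) \le t$ for all $z \in f(X)$. The natural candidates are to take $\phi = f$ itself (viewed as a map $X \to f(X)$, which is well-defined and $\kappa$-continuous by hypothesis since $f \in C(X,\kappa)$) and to take $\psi: f(X) \to X$ to be the inclusion map. The inclusion is trivially $\kappa$-continuous, and for any $z \in f(X)$ we have $\psi(z) = z$, so $d_{(X,\kappa)}(z,\psi(z)) = 0 \le m$. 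For the map $\phi = f$, the $m$-map hypothesis gives exactly $d_{(X,\kappa)}(x, f(x)) \le m$ for all $x \in X$. Hence $t = m$ witnesses membership in the defining set, so the greatest lower bound is at most $m$.

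The one point requiring a moment of care is that the metric of continuity is defined as an infimum over $t > 0$, whereas the $m$-map bound is a non-strict inequality and $m$ could in principle be $0$. If $m > 0$, then $t = m$ is itself an admissible value and the bound $\delta \le m$ is immediate. If $m = 0$, then $f = \id_X$ by Remark~\ref{elementaryProps}, so $X = f(X)$ and one checks that every $t > 0$ is admissible (taking both maps to be the identity), forcing $\delta = 0 = m$. I would either note this edge case briefly or simply observe that for any $\varepsilon > 0$ the value $t = m + \varepsilon$ is admissible, so $\delta \le m + \varepsilon$ for all $\varepsilon > 0$ and therefore $\delta \le m$; this phrasing sidesteps the strictness issue uniformly.

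The main obstacle, such as it is, is conceptual rather than technical: one must correctly identify $f$ and the inclusion as the two witnessing maps and verify that $f$ genuinely maps into the subspace $f(X)$ with the subspace adjacency. There is no hard estimate to grind through, and Lemma~\ref{distLemma} is not even needed here since all distances are computed with the ambient metric $d_{(X,\kappa)}$ exactly as the metric of continuity requires.
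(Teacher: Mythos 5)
Your proof is correct and follows essentially the same route as the paper: both use $f$ itself as the map $X \to f(X)$ and the inclusion as the map $f(X) \to X$, with the first inequality cited from Proposition~\ref{HausAndContinuityMetrics}. Your extra attention to the $m=0$ edge case (since $\delta_d$ is an infimum over $t>0$) is a detail the paper's proof glosses over, but it does not change the argument.
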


\begin{proof}
That $H_{d_{(X,\kappa)}}(X,f(X)) \le \delta_{d_{(X,\kappa)}}(X,f(X))$ 
comes from Proposition~\ref{HausAndContinuityMetrics}.

Let $x \in X$ and let $y=f(x) \in f(X)$. By choice of $f$, $d_{(X,\kappa)}(x,y) \le m$.

Let $g: f(X) \to X$ be the inclusion function $g(y)= y$.
Clearly $g$ is a $0$-map, hence an $m$-map.
The assertion follows.
\end{proof}

\subsection{Rigid images}
\begin{defn}
\label{rigid}
{\rm \cite{HaarmanEtal}}
A digital image $(X,\kappa)$ is {\em rigid} if the only member of
$C(X,\kappa)$ that is homotopic to $\id_X$ is $\id_X$.
\end{defn}

See Figure~\ref{fig:rigid} for an example of a rigid digital image.

\begin{figure}
    \centering
    \includegraphics{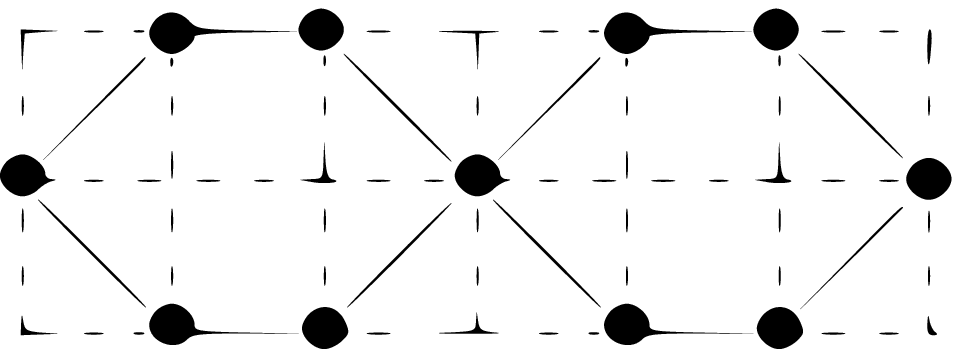}
    \caption{The digital image of Figure~1 of~\cite{bs20}. Example~3.11
    of~\cite{bs20} shows this image, using
    $c_2$ adjacency, is rigid.
    }
    \label{fig:rigid}
\end{figure}

\begin{prop}
\label{rigidThm}
Let $(X,\kappa)$ be a connected digital image that is rigid. Then
there is no $1$-map in $C(X,\kappa)$ other than $\id_X$.
\end{prop}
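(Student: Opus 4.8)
The plan is to show that every $1$-map $f \in C(X,\kappa)$ is homotopic to $\id_X$; rigidity (Definition~\ref{rigid}) then forces $f = \id_X$, which is exactly the assertion. The natural homotopy to exhibit takes a single time step: define $h : X \times [0,1]_{\Z} \to X$ by $h(x,0) = x$ and $h(x,1) = f(x)$ for all $x \in X$. I would then verify that $h$ satisfies each of the three conditions in Definition~\ref{htpy-2nd-def} with $m = 1$.

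The endpoint condition holds by construction, since $h(x,0) = \id_X(x)$ and $h(x,1) = f(x)$. The crucial step is checking that each induced path $h_x : [0,1]_{\Z} \to X$ is $(c_1,\kappa)$-continuous. Because the only adjacency in $[0,1]_{\Z}$ is $0 \adj_{c_1} 1$, Theorem~\ref{contThm} reduces this to verifying $h_x(0) \adjeq_{\kappa} h_x(1)$, i.e.\ $x \adjeq_{\kappa} f(x)$. This is precisely the defining property of a $1$-map: by Remark~\ref{elementaryProps}, $f$ being a $1$-map means every $x \in X$ is an approximate fixed point, so $f(x) \adjeq_{\kappa} x$ holds for all $x$. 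Finally, each slice $h_t : X \to X$ must be $\kappa$-continuous; but there are only two slices, $h_0 = \id_X$ and $h_1 = f$, and both lie in $C(X,\kappa)$ by hypothesis.

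Having confirmed all three conditions, $h$ is a digital $(\kappa,\kappa)$-homotopy between $\id_X$ and $f$, so $f$ is homotopic to $\id_X$. Since $(X,\kappa)$ is rigid, the only member of $C(X,\kappa)$ homotopic to $\id_X$ is $\id_X$ itself, whence $f = \id_X$. The argument is essentially mechanical, and I expect no genuine obstacle; the only point deserving care is confirming that the single-step homotopy really meets every requirement of Definition~\ref{htpy-2nd-def}, and in particular that the $1$-map hypothesis is exactly what makes each $h_x$ a legitimate $\kappa$-path.
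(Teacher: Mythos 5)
Your proposal is correct and follows essentially the same route as the paper: both construct the one-step homotopy $h(x,0)=x$, $h(x,1)=f(x)$ and invoke rigidity, with your version merely spelling out the verification of Definition~\ref{htpy-2nd-def} that the paper leaves as ``easily seen.''
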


\begin{proof}
Let $f$ be a member of $ C(X,\kappa)$ that is a $1$-map. Then the function $h: X \times [0,1]_{\Z} \to X$ defined by
   \[ h(x,0) = x, ~~ h(x,1) = f(x)
   \]
is easily seen to be a homotopy from $\id_X$ to $f$. It
follows from Definition~\ref{rigid} that $f = \id_X$.
\end{proof}

\section{Trees}
In this section, $(T,\kappa)$ is a finite tree and
$A$ is the set of members of $T$ that have degree 1, i.e.,
for $a \in T$, $\#N(a)=1$ if and only if $a \in A$.

\begin{thm}
\label{treeFreeze}
{\rm \cite{BxFpSets2}}
$A$ is a minimal freezing set for $(T,\kappa)$
when $\#T > 1$.
\end{thm}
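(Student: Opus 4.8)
The plan is to prove the two assertions separately: first that $A$ is a freezing set for $(T,\kappa)$, then that it is minimal. For the freezing claim, the key structural fact is that in a finite tree the arc joining any two vertices is unique, and is therefore the unique shortest $\kappa$-path between them. This lets me invoke Proposition~\ref{uniqueShortest}: whenever two vertices lie in $\Fix(f)$, the entire arc joining them is contained in $\Fix(f)$. So it will suffice to show that every vertex of $T$ lies on the arc joining some pair of leaves; then, given $f \in C(T,\kappa)$ with $A \subset \Fix(f)$, every vertex is forced into $\Fix(f)$, whence $f = \id_T$ and the definition of a freezing set (Definition~\ref{freezeDef}) is satisfied.

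The combinatorial lemma I would establish is: for $\#T > 1$, every $v \in T$ lies on the arc between two distinct members of $A$. If $v$ is itself a leaf, I pair it with any other leaf, using the standard fact that a finite tree with more than one vertex has at least two degree-$1$ vertices. If instead $\#N(v) \ge 2$, then deleting $v$ splits $T$ into at least two subtrees, each of which must contain a leaf of $T$ (the vertex of a finite subtree farthest from $v$ has degree $1$ in $T$); choosing leaves $\ell_1,\ell_2$ lying in two different components, the unique arc from $\ell_1$ to $\ell_2$ must pass through $v$. Combined with the previous paragraph, this gives $T = \Fix(f)$.

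For minimality, I would show that for each leaf $a \in A$ the set $A \setminus \{a\}$ is not a freezing set; this suffices, since every proper subset $A'$ of $A$ omits some leaf $a$ and hence satisfies $A' \subset A \setminus \{a\}$, so a single counterexample for $A \setminus \{a\}$ also witnesses the failure of $A'$. Let $b$ be the unique neighbor of $a$, i.e. $N(a) = \{b\}$, and define $f \colon T \to T$ by $f(a) = b$ and $f(x) = x$ for $x \neq a$. Because $a$'s only incident edge is $a \adj_{\kappa} b$, and there $f(a) = b = f(b)$ so that $f(a) \adjeq_{\kappa} f(b)$, while $f$ fixes both endpoints of every other edge, Theorem~\ref{contThm} gives $f \in C(T,\kappa)$. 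Since $f$ fixes every vertex except $a$, we have $A \setminus \{a\} \subset \Fix(f)$ yet $f(a) = b \neq a$, so $f \neq \id_T$.

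The only nonroutine step is the combinatorial lemma that every vertex lies on a leaf-to-leaf arc, together with the observation that in a tree the unique arc between two vertices is the unique shortest $\kappa$-path, so that Proposition~\ref{uniqueShortest} applies. Once these are in place, the freezing direction is immediate and the minimality construction is elementary; the main care needed is simply to verify that $f$ collapsing a single leaf onto its neighbor is $\kappa$-continuous, which follows directly from Theorem~\ref{contThm}.
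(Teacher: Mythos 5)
Your proof is correct. Note that the paper itself states Theorem~\ref{treeFreeze} as a recalled result from~\cite{BxFpSets2} and gives no proof, so there is nothing to compare against directly; but your argument is exactly in the spirit of the machinery the paper sets up, using Proposition~\ref{uniqueShortest} together with the facts that in a tree the unique arc between two vertices is the unique shortest path and that every vertex lies on a leaf-to-leaf arc, and your minimality construction (collapsing a single leaf onto its neighbor, verified continuous via Theorem~\ref{contThm}) is sound, including the reduction of arbitrary proper subsets to sets of the form $A \setminus \{a\}$.
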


\begin{thm}
\label{tree1,1}
Let $(T,\kappa)$ be a finite tree and let $A$ be the 
set of members of $T$ that have degree 1. Then
$A$ is an $(m,m)$-limiting set for
$(T,\kappa)$, for $m \in \{ \, 0, 1 \, \}$. 
For $m=1$, $A$ is minimal if
and only if $\#T \neq 2$.
\end{thm}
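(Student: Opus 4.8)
The case $m=0$ needs no new work: by the first bullet of Remark~\ref{elementaryProps}, $(T,\kappa)$ is $(A,0,0)$-limited precisely when $A$ is a freezing set for $(T,\kappa)$, and Theorem~\ref{treeFreeze} asserts exactly this for $\#T>1$. So the substance of the statement is the case $m=1$: the plan is to show that if $f\in C(T,\kappa)$ moves every leaf by at most $1$, then $f$ moves every vertex by at most $1$. Fix such an $f$ and an arbitrary $v\in T$; I want $d_{(T,\kappa)}(v,f(v))\le 1$. If $v$ is a leaf this is the hypothesis, and if $w:=f(v)=v$ there is nothing to prove, so I may assume $v$ is interior and $w\neq v$.

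The idea is to reduce to the interval result, Proposition~\ref{intervalLimitation}, by using that $T$ is a tree. First I would build an arc $P$ in $T$ whose endpoints $a_1,a_2$ are leaves and which passes through \emph{both} $v$ and $w$: take the unique arc from $v$ to $w$ and extend it past each end, away from the other, until it reaches a leaf. Since $T$ has no cycles, the induced subimage $(P,\kappa)$ is isomorphic to a digital interval $([0,k]_{\Z},c_1)$, and by Theorem~\ref{invariant} together with Proposition~\ref{intervalLimitation} it is $(\{a_1,a_2\},1,1)$-limited.

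Next I would transport $f$ to a self-map of $P$ via the nearest-point projection $r\colon T\to P$, which is well defined and $\kappa$-continuous because an arc in a tree is a geodesically convex (hence $1$-Lipschitz retract) subset. Put $g=r\circ f|_P$; by Theorem~\ref{composition}, $g\in C(P,\kappa)$. At the endpoints $g$ is a $1$-map: since $a_i$ is a leaf and $d_{(T,\kappa)}(a_i,f(a_i))\le 1$, the image $f(a_i)$ is either $a_i$ or its unique neighbor, both of which lie on $P$, so $r$ fixes $f(a_i)$ and $d_{(P,\kappa)}(a_i,g(a_i))\le 1$. Hence $g$ is a $1$-map on all of $P$, giving $d_{(P,\kappa)}(v,g(v))\le 1$. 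Because $w=f(v)$ was arranged to lie on $P$ we have $g(v)=r(w)=w$, so $d_{(P,\kappa)}(v,w)\le 1$, and Lemma~\ref{distLemma} yields $d_{(T,\kappa)}(v,w)\le 1$, as required. The main obstacle is exactly this step: controlling $f(v)$ when it leaves any given path. The device that overcomes it is choosing $P$ to pass through $f(v)$ itself, so that projecting onto $P$ discards no distance and the interval bound transfers verbatim back to $T$.

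For minimality at $m=1$, I would argue both directions. If $\#T=2$ then $diam(T,\kappa)=1$, so every $f\in C(T,\kappa)$ is a $1$-map and any proper subset of $A$ is already $(1,1)$-limiting; thus $A$ is not minimal. If $\#T\ge 3$, it suffices to show that for each leaf $a_0$, with unique neighbor $b$, the set $A\setminus\{a_0\}$ is not $(1,1)$-limiting (a subset of a non-limiting set is non-limiting, since restricting an $m$-map keeps it an $m$-map by Remark~\ref{elementaryProps}, so this handles every proper subset). Here $\deg b\ge 2$, for otherwise $\#T=2$; choose a neighbor $c\neq a_0$ of $b$ and define $f(a_0)=c$ with $f=\id$ elsewhere. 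One checks directly that $f\in C(T,\kappa)$ and that $f$ fixes every leaf other than $a_0$, so $f|_{A\setminus\{a_0\}}$ is a $0$-map; yet $d_{(T,\kappa)}(a_0,f(a_0))=2$, so $f$ is not a $1$-map. This establishes minimality exactly when $\#T\neq 2$.
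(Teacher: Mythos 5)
Your argument is correct, and it shares the paper's central device: given $v$ with $w=f(v)$ and $d_{(T,\kappa)}(v,w)>1$ to be ruled out, both proofs construct a leaf-to-leaf arc $P$ containing both $v$ \emph{and} $w$, and both use the degree-one hypothesis to pin $f(a_i)$ into $N^*(a_i)\subset P$ at the two endpoints. Where you diverge is in how the bound is extracted. The paper stays inside $T$: it parametrizes $P$ as a geodesic $p:[0,k]_{\Z}\to P$, writes $x=p(t)$, $f(x)=p(t')$ with $t'>t+1$, and derives the contradiction $d_{(T,\kappa)}(f(x),f(y_0))\ge t'-1>t=d_{(T,\kappa)}(x,y_0)$ directly from the fact that a continuous map cannot increase the shortest-path distance. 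You instead manufacture a self-map $g=r\circ f|_P$ of $P$ via the nearest-point (gate) projection $r:T\to P$ and then quote Proposition~\ref{intervalLimitation} through the isomorphism invariance of Theorem~\ref{invariant}. Your route buys clean reuse of the interval result and avoids explicit distance bookkeeping, but it costs one nontrivial ingredient the paper never needs: that $r$ is well defined and $\kappa$-continuous. This is true (in a tree the nearest point of an arc is unique, and the gate identity $d(x,q)=d(x,r(x))+d(r(x),q)$ for $q\in P$ yields $x\adj y\Rightarrow r(x)\adjeq r(y)$), but as written it is asserted rather than proved and should be spelled out. Two trivial omissions: the case $\#T=1$ (where $A=\emptyset$ is vacuously limiting and vacuously minimal) is skipped, and you should note explicitly that an arc in a tree carries no chords, so $(P,\kappa)$ really is isomorphic to a digital interval and $d_{(P,\kappa)}$ agrees with $d_{(T,\kappa)}$ on $P$ (the inequality you need at the end is exactly Lemma~\ref{distLemma}). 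Your minimality argument is essentially the paper's, with the same two-step displacement of a single leaf as the counterexample, plus the correct observation that non-limitedness passes down to subsets.
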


\begin{proof}
Trivially, when $\#T = 1$, 
$A= \emptyset$ is $(0,0)$-limiting. 
By Theorem~\ref{treeFreeze}, $A$ is $(0,0)$-limiting 
when $\#T > 1$. Next we show that
$A$ is a $(1,1)$-limiting set for $(T,\kappa)$.

Let $x \in X$, $f \in C(T,\kappa)$ such that
$f|_A$ is a $1$-map. We show that the
assumption 
\begin{equation}
\label{treeAssump}
d_{(T,\kappa)}(x,f(x)) > 1
\end{equation}
leads to a contradiction.

There exist $y_0,y_1 \in A$ such that
the unique shortest $\kappa$-path $P$ in $T$ from
$y_0$ to $y_1$ contains $\{ \, x, f(x) \, \}$.

Let $p: [0,k]_{\Z} \to P$ be an isomorphism with
$p(0)=y_0$ and $p(k)=y_1$.
Since $y_0,y_1 \in A$, we must have
$\{ \, f(y_0), f(y_1) \, \} \subset P$. Since
$f|_A$ is a $1$-map, 
\[ f(y_0) \in \{ \, y_0 = p(0), p(1) \, \}, ~~~~~
   f(y_1) \in \{ \, y_1=p(k), p(k-1) \, \}
\]
Let
$x = p(t)$, $f(x) = p(t')$. 
\begin{itemize}
    \item Suppose $t \le t'$. By~(\ref{treeAssump}),
          $t' > t + 1$, and $d_{(T,\kappa)}(y_0,x) = t$.
          Then 
          \[ d_{(T,\kappa)}(f(x), f(y_0)) \ge t' - 1 >
             t+1-1 = t = d_{(T,\kappa)}(x,y_0),
          \]
          which is impossible, since $f$ is continuous.
    \item A similar contradiction arises if $t > t'$.
\end{itemize}
Therefore, $f$ is a 1-map, which establishes that
$A$ is $(1,1)$-limiting.

Suppose $\#T \neq 2$. As above,
$A$ is minimal for $\#T =1$. So assume $\#T > 2$.
Then $\#A > 1$, since 
$(T,\kappa)$ is a tree.
Let $a$ and $b$ be distinct members of $A$. Let
$P$ be the unique shortest $\kappa$-path in $T$ from
$a$ to $b$. There is a $(c_1,\kappa)$-isomorphism
$p: [0, length(P)]_{\Z} \to P$ such that $p(0)=a$.
Then the function $f: T \to T$ given by
\[ f(x) = \left \{ \begin{array}{ll}
    x  & \mbox{if } x \neq a; \\
    p(2) & \mbox{if } x = a,
   \end{array}    \right .
\]
is easily seen to belong to $C(T,\kappa)$; and $f$
is a 2-map. Thus $A \setminus \{ \, a \, \}$ is not
a $(1,1)$-limiting set, 
so $A$ is a minimal $(1,1)$-limiting set for $(T,\kappa)$.

For $\#T = 2$, $A \neq \emptyset$, so $A$ is not
minimal since in this case
we see easily that $\emptyset$ is a minimal
$(1,1)$-limiting set for $(T,\kappa)$.
\end{proof}

\section{Rectangles with axis-parallel sides}
In this section, we consider limiting sets for
digital rectangles of the form
$[a,b]_{\Z} \times [c,d]_{\Z} \subset \Z^2$.

\subsection{$c_1$ $(A,m,m)$-limited rectangles}
\begin{thm}
\label{c1Rectangles}
Let $X = [0,j]_{\Z} \times [0,k]_{Z}$. Let $A$ be 
the set of corners of $X$, i.e., 
\[ A = \{ \, (0,0),~(0,k),~(j,0),~(j,k) \, \}.
\]
Then $(X,c_1)$ is $(A,m,m)$-limited for all
$m \in \N^*$.
\end{thm}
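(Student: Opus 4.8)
The plan is to exploit the fact that the shortest-path metric $d_{(X,c_1)}$ on the rectangle $X = [0,j]_{\Z} \times [0,k]_{\Z}$ coincides with the Manhattan ($\ell^1$) metric, namely $d_{(X,c_1)}((x,y),(x',y')) = |x-x'| + |y-y'|$. This holds because $X$ is a full axis-parallel box in $\Z^2$, so any two of its points are joined by a monotone staircase $c_1$-path whose length equals this sum, and no $c_1$-path can be shorter. Having fixed an arbitrary $f \in C(X,c_1)$ with $f|_A$ an $m$-map, I would associate to each corner $c \in A$ the functional $h_c \colon X \to \N^*$ given by $h_c(p) = d_{(X,c_1)}(f(p), c)$, recording how far the image of $p$ lies from $c$.

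The first key step is to observe that each $h_c$ is $1$-Lipschitz along $c_1$-paths. Indeed, if $p \adj_{c_1} p'$ then Theorem~\ref{contThm} gives $f(p) \adjeq_{c_1} f(p')$, so $d_{(X,c_1)}(f(p), f(p')) \le 1$, and the triangle inequality for the metric $d_{(X,c_1)}$ yields $|h_c(p) - h_c(p')| \le 1$. Travelling along a shortest $c_1$-path from $c$ to an arbitrary $p$, whose length is $d_{(X,c_1)}(c,p)$, then gives $h_c(p) \le h_c(c) + d_{(X,c_1)}(c,p)$. Since $c \in A$ and $f|_A$ is an $m$-map, $h_c(c) = d_{(X,c_1)}(f(c),c) \le m$, so for every $p \in X$ and every corner $c$ I obtain
\[ d_{(X,c_1)}(f(p), c) \le m + d_{(X,c_1)}(p, c). \]
I will refer to this as the \emph{corner estimate}; it is the conceptual heart of the argument and needs no assumption on the location of $p$.

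The final step is to combine the four instances of the corner estimate, one per corner, to force $d_{(X,c_1)}(f(p),p) \le m$. Writing $p = (x,y)$ and $f(p) = (a,b)$ and expanding each Manhattan distance, the estimates at $(0,0)$, $(j,k)$, $(j,0)$, and $(0,k)$ reduce respectively to
\[ (a-x)+(b-y) \le m, \quad (x-a)+(y-b) \le m, \quad (x-a)+(b-y) \le m, \quad (a-x)+(y-b) \le m. \]
Whatever the signs of $a-x$ and $b-y$, exactly one of these four left-hand sides equals $|a-x| + |b-y| = d_{(X,c_1)}(f(p),p)$, so that distance is bounded by $m$. As $p$ was arbitrary, $f$ is an $m$-map, which is precisely the assertion that $(X,c_1)$ is $(A,m,m)$-limited.

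I expect the only genuine subtlety to be the sign bookkeeping in the last step, i.e. verifying that each displacement quadrant of $(a-x,b-y)$ is captured by the corner diagonally opposite to the displacement; together with the routine identification of $d_{(X,c_1)}$ with the $\ell^1$ metric, which underlies every distance computation. The Lipschitz estimate itself is immediate from continuity, and the fact that the four corner constraints jointly pin down the displacement is what makes the corner set suffice for all $m$ simultaneously.
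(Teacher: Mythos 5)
Your proof is correct and is essentially the paper's argument in a cleaner, contrapositive-free packaging: both rest on the non-expansiveness of a $c_1$-continuous map for the shortest-path ($\ell^1$) metric, the $m$-map hypothesis at the corners, and a four-way sign analysis in which the corner lying diagonally opposite the displacement of $p$ supplies the binding constraint. The only cosmetic slip is ``exactly one'' in the final step (when $f(p)=p$ all four left-hand sides coincide); ``at least one'' is what you need and is what you actually prove.
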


\begin{proof}
Let $f \in C(X,c_1)$ such that $f|_A$ is an $m$-map.
Suppose there exists $(a,b) \in X$ such that
\begin{equation}
    \label{distFrOrigin}
    d_{(X,c_1)}((a,b),f(a,b)) > m
\end{equation}

First we consider the case in which $f$ moves $(a,b)$ 
away from the corner
$(0,0)$, i.e., $p_1(f(a,b)) \ge a$, $p_2(f(a,b)) \ge b$.
From~(\ref{distFrOrigin}) it follows that
\[  p_1(f(a,b)) - a + p_2(f(a,b)) - b > m, 
\]
so
\[ d_{(X,c_1)}((a,b),(0,0)) = a+b < 
   p_1(f(a,b)) + p_2(f(a,b)) - m \le 
\]
\[
    p_1(f(a,b)) + p_2(f(a,b)) - [p_1(f(0,0)) + p_2(f(0,0))]
\]
\[
    = d_{(X,c_1)}(f(a,b), f(0,0)).
\]
which is impossible since $f$ is continuous.

A similar argument can be made for each of the following cases.
\begin{itemize}
    \item $f$ moves $(a,b)$ away from the corner $(j,0)$, i.e.,
          \[p_1(f(a,b)) \le a,~~ p_2(f(a,b)) \ge b.
          \]
          Here we obtain
          $d_{(X,c_1)}(f(a,b), f(j,0)) > d_{(X,c_1)}((a,b), (j,0))$. As above,
          the latter is a contradiction.
    \item $f$ moves $(a,b)$ away from the corner $(0,k)$, i.e.,
          \[p_1(f(a,b)) \ge a,~~ p_2(f(a,b)) \le b.
          \]
          Here we obtain
          $d_{(X,c_1)}(f(a,b), f(0,k)) > d_{(X,c_1)}((a,b), (0,k))$. As above,
          the latter is a contradiction.
    \item $f$ moves $(a,b)$ away from the corner $(j,k)$, i.e.,
          \[p_1(f(a,b)) \le a,~~ p_2(f(a,b)) \le b.
          \]
          Here we obtain
          $d_{(X,c_1)}(f(a,b), f(j,k)) >
          d_{(X,c_1)}((a,b), (j,k))$. As above,
          the latter is a contradiction.
\end{itemize}

Since each case yields a contradiction, the assertion is established.
\end{proof}

\subsection{$c_n$ $(A,m,m)$-limited $n$-cubes}
\begin{thm}
\label{cnCube}
Let $X = \prod_{i=1}^n [a_i,b_i]_{\Z}$. Let $A$ be the set of corners of
$X$, i.e., $A = \prod_{i=1}^n \{ \, a_i,b_i \, \}$. Let $m > 0$.
Let $f \in C(X,c_n)$ such that $f|_A$ is $m$-limited. 
Then $f$ is $m$-limited. Hence $(X,c_n)$ is
$(A,m,m)$-limited.
\end{thm}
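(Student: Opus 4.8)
The plan is to follow the template of Theorem~\ref{c1Rectangles}, replacing the additive geometry of $c_1$ by the sup-metric geometry of $c_n$. The first step is to record two facts about the shortest-path metric. Since two points of $\Z^n$ are $c_n$-adjacent exactly when they differ by at most $1$ in every coordinate, and since a move that changes every coordinate by at most one stays inside a box, one checks that on $X = \prod_{i=1}^n[a_i,b_i]_{\Z}$ the metric is $d_{(X,c_n)}(x,y) = \max_{1 \le i \le n} |x_i - y_i|$. The second fact, which I would establish once and reuse, is that every $g \in C(X,c_n)$ is non-expansive: pushing a shortest $c_n$-path from $x$ to $y$ through $g$ and applying Theorem~\ref{contThm} gives $d_{(X,c_n)}(g(x),g(y)) \le d_{(X,c_n)}(x,y)$.

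Next, I would argue by contradiction. Assume $f|_A$ is an $m$-map but some $x \in X$ satisfies $d_{(X,c_n)}(x,f(x)) > m$; then there is a coordinate $i_0$ with $|p_{i_0}(f(x)) - x_{i_0}| > m$, and by symmetry I take $p_{i_0}(f(x)) - x_{i_0} > m$. Mimicking the corner case analysis of Theorem~\ref{c1Rectangles}, I would choose the corner $c \in A$ lying \emph{opposite} to the displacement of $x$: set $c_i = a_i$ when $p_i(f(x)) \ge x_i$ and $c_i = b_i$ otherwise, so in particular $c_{i_0} = a_{i_0} \le x_{i_0}$. From $d_{(X,c_n)}(c,f(c)) \le m$ we get $p_{i_0}(f(c)) \le a_{i_0} + m$, whence
\[
 p_{i_0}(f(x)) - p_{i_0}(f(c)) > (x_{i_0} + m) - (a_{i_0} + m) = x_{i_0} - a_{i_0} = |c_{i_0} - x_{i_0}|.
\]
Thus $d_{(X,c_n)}(f(c),f(x)) \ge |p_{i_0}(f(x)) - p_{i_0}(f(c))| > |c_{i_0} - x_{i_0}|$, and the goal is to upgrade this to $d_{(X,c_n)}(f(c),f(x)) > d_{(X,c_n)}(c,x)$, which contradicts non-expansiveness.

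The main obstacle is exactly this last upgrade, and it is where the $c_n$ case genuinely departs from the $c_1$ case. For $c_1$ the distance is the additive $\ell^1$ norm, so the displacements in all coordinates cooperate and a single summation closes the argument no matter which coordinate is largest. For $c_n$ the distance is the $\ell^\infty$ maximum, and $|c_{i_0}-x_{i_0}|$ need not equal $d_{(X,c_n)}(c,x) = \max_i |c_i - x_i|$: a coordinate $i \neq i_0$ in which $X$ is wide while $x_i$ sits near the middle can make $|c_i - x_i|$ exceed $x_{i_0}-a_{i_0}$, so the displayed inequality does not by itself yield a contradiction. Closing this gap is the crux, and I would concentrate all effort here, since the metric identification and the non-expansiveness lemma are routine. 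Concretely, one must either show that the large-displacement coordinate $i_0$ can always be arranged to realize the sup-distance to the chosen corner, or else control the coordinates $i \neq i_0$ simultaneously—for instance by bounding $|p_i(f(c)) - p_i(f(x))|$ from below via continuity along a carefully chosen path. Because the failure of additivity in $\ell^\infty$ makes a corner-only hypothesis delicate, I would stress-test this step against small explicit maps on a square before committing to it.
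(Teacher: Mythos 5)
Your proposal stops at the step you yourself call the crux, so as written it is not a proof of Theorem~\ref{cnCube}; but your diagnosis of that step is exactly right, and the gap cannot be closed, because the theorem as stated is false. Take $n=2$, $[a_i,b_i]_{\Z}=[0,2]_{\Z}$, $m=1$, and the function $f$ of Example~\ref{A(0,1)notA(1,1)} (Figure~\ref{fig:2-map}): $f\in C([0,2]_{\Z}^2,c_2)$ and $f|_A$ is a $1$-map on the corner set $A$, yet $d_{(X,c_2)}\bigl((1,0),f(1,0)\bigr)=d_{(X,c_2)}\bigl((1,0),(1,2)\bigr)=2$, so $f$ is not a $1$-map and $(X,c_2)$ is not $(A,1,1)$-limited. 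This is precisely the failure mode you predicted: the large-displacement coordinate is $i_0=2$ with displacement $2>m$, the opposite corner is $c=(0,0)$ with $f(c)=(0,1)$, and the inequality $p_{i_0}(f(x))-p_{i_0}(f(c))>x_{i_0}-a_{i_0}$ only yields $d_{(X,c_2)}(f(x),f(c))\ge 1>0$, which is perfectly compatible with non-expansiveness since $d_{(X,c_2)}(x,c)=\max\{1,0\}=1$. Your closing suggestion to stress-test the step against small explicit maps on a square would have produced exactly this map --- it is already in the paper, three pages earlier.

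For comparison, the paper's own proof of Theorem~\ref{cnCube} is the one-line reduction you sketched (``for some index $i$, $|x_i-p_i(f(x))|>m$; this leads to a contradiction as in the proof of Theorem~\ref{c1Rectangles}''), and it fails for the reason you isolated. The $c_1$ argument closes because $d_{(X,c_1)}$ is the additive $\ell^1$ distance, so the per-coordinate lower bounds on $|p_i(f(x))-p_i(f(c))|$ can be summed to exceed $d_{(X,c_1)}(x,c)$; the $c_n$ distance on a box is the coordinate maximum, and a single coordinate displacement exceeding $m$ need not dominate $\max_j|x_j-c_j|$ for any corner $c$. So do not look for a way to ``arrange that $i_0$ realizes the sup-distance'' or to control the other coordinates --- no such repair exists. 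The honest conclusion is that the corner set of an $n$-cube is not in general an $(m,m)$-limiting set for $(X,c_n)$ when $m\ge 1$, and Example~\ref{A(0,1)notA(1,1)} should be read as a counterexample to Theorem~\ref{cnCube}, not merely to the converse of Proposition~\ref{limitedImpliesCold}.
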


\begin{proof}
If otherwise, then for some $x = (x_1,\ldots, x_n) \in X$,
$d_{(X,c_n)}(x,f(x)) > m$. Then for some index~$i$,
$\mid x_i - p_i(f(x)) \mid ~ > m$. This leads to a contradiction as in
the proof of Theorem~\ref{c1Rectangles}. The assertion follows.
\end{proof}

\begin{prop}
\label{notCold}
{\rm \cite{BxFpSets2}}
Let $X = \prod_{i=1}^n [0,m_i]_{\Z} \subset \Z^n$, 
where $m_i > 1$ for all~$i$.
Let $A \subset Bd(X)$ be such that
$A$ is not $c_n$-dominating in $Bd(X)$.
Then $A$ is not a cold set for $(X,c_n)$.
\end{prop}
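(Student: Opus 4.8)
The plan is to prove the statement directly by exhibiting a single continuous self-map of $X$ that fixes $A$ pointwise yet moves some point by shortest-path distance $2$; by Definition~\ref{s-cold-def} with $s=1$, such a map witnesses that $A$ is not a cold set. First I would unpack the hypothesis: that $A$ is not $c_n$-dominating in $Bd(X)$ means there is a point $z \in Bd(X)$ for which no $a \in A$ satisfies $z \adjeq_{c_n} a$; equivalently $N^*(z) \cap A = \emptyset$, so $z$ together with all of its $c_n$-neighbors avoids $A$. Since $z \in Bd(X)$, some coordinate $j$ has $z_j \in \{0, m_j\}$. I set $\varepsilon = 1$ if $z_j = 0$ and $\varepsilon = -1$ if $z_j = m_j$, so that $z + 2\varepsilon e_j \in X$ because $m_j > 1$ (here $e_j$ denotes the $j$-th standard basis vector).

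Next I would define $g : X \to X$ by
\[
 g(x) = \begin{cases} z + 2\varepsilon e_j & \text{if } x = z,\\ x + \varepsilon e_j & \text{if } x \adj_{c_n} z \text{ and } x_j = z_j,\\ x & \text{otherwise,} \end{cases}
\]
which pushes $z$ two units in the direction $\varepsilon e_j$, pushes each neighbor of $z$ lying in the same extreme hyperplane one unit in that direction, and fixes every other point. Because every point that $g$ moves lies in $N^*(z)$, and $N^*(z) \cap A = \emptyset$, we get $g|_A = \id_A$ immediately. Moreover $g(z)$ and $z$ differ only in coordinate $j$, and there by $2$, so any $c_n$-path realizing their distance needs at least two steps; hence $d_{(X,c_n)}(z, g(z)) \ge 2 > 1$, which yields the conclusion once continuity is verified.

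The main work—and the step I expect to be the main obstacle—is showing $g \in C(X,c_n)$. By Theorem~\ref{contThm} it suffices to check that $c_n$-adjacent points map to $c_n$-adjacent-or-equal points; recalling that $c_n$-adjacency in $\Z^n$ means differing by at most $1$ in every coordinate while remaining distinct, this reduces to a finite coordinatewise case analysis. The delicate cases are those involving $z$ and those across the interface between the moved set and the fixed part of $X$: for a neighbor $w$ of $z$ with $w_j = z_j$ one compares $g(z) = z + 2\varepsilon e_j$ with $g(w) = w + \varepsilon e_j$ and notes the $j$-th coordinates now differ by only $1$; for a neighbor $w$ with $w_j = z_j + \varepsilon$ one compares $z + 2\varepsilon e_j$ with the fixed $w$; and for a moved point $x$ adjacent to a fixed point $x'$ outside $N^*(z)$ one compares $x + \varepsilon e_j$ with $x'$. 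In every case the two images still differ by at most $1$ in each coordinate, so they are $c_n$-adjacent or equal, establishing continuity. The sign $\varepsilon$ handles the reflected possibility $z_j = m_j$ uniformly, though one could alternatively invoke the reflection isomorphism together with Theorem~\ref{invariant}. Once continuity is confirmed, $g$ fixes $A$ but is not a $1$-map, so $A$ is not cold for $(X,c_n)$.
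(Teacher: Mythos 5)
Your construction is correct and complete: the undominated point $z\in Bd(X)$ gives $N^*(z)\cap A=\emptyset$, your map $g$ moves only points of $N^*(z)$ (so it fixes $A$), the coordinatewise continuity check goes through in every case you list, and $d_{(X,c_n)}(z,g(z))\ge 2$ because a $c_n$-step changes each coordinate by at most $1$. Note that the paper itself gives no proof of this proposition --- it is recalled from \cite{BxFpSets2} --- but your ``push $z$ two units inward with a one-unit collar on its same-hyperplane neighbors'' argument is exactly the kind of local deformation used there, so there is nothing to flag.
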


\begin{remark}
We cannot include 0 as a value of $m$ 
in Theorem~\ref{cnCube} since,
by Proposition~\ref{notCold},
if for some index $i$ we have $b_i - a_i > 2$ 
then $A$ is not cold and 
therefore is not a freezing set for $(X,c_n)$. 
\end{remark}

\section{Simple closed curves}
We consider limiting sets for digital simple closed
curves. Throughout this section, $C_v$ is a
digital simple closed curve (a {\em cycle} of $v$
points): $C_v = \{ \, x_i \, \}_{i=0}^{v-1}$ for 
some $v \ge 4$ with the members of $C_v$ 
indexed circularly.

For $v \ge 4$, we define
\[ D(v) = \left \{ \begin{array}{ll}
     \frac{v-2}{4} - 1 & \mbox{if } v-2 
                     \mbox{ is a multiple of } 4; \\
                     \\
     \lfloor \frac{v-2}{4} \rfloor & \mbox{if } v-2 
                     \mbox{ is not a multiple of } 4  .
     \end{array} \right .
\]

For $0 \le d < v$, we say the function
$r_d: C_v \to C_v$ given by
\[ r_d(x_i) = x_{(i+d) \mod v}
\]
is a {\em rotation}. 

The function $f: C_v \to C_v$ given by
$f(x_i) = x_{(v-i) \mod v}$ is the {\em flip map}.

Rotations and flip maps are isomorphisms of
$(C_v,\kappa)$.

\begin{thm}
{\rm \cite{bs20}}
\label{Cn-maps}
Let $f \in C(C_v,\kappa)$. Then 
\begin{itemize}
    \item $f$ is not surjective, or
    \item $f$ is a rotation, or
    \item $f$ is the composition of a flip map and a
          rotation.
\end{itemize}
\end{thm}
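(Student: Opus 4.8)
The plan is to handle the surjective case, since the first alternative (non-surjectivity) imposes nothing and needs no argument. So suppose $f \in C(C_v,\kappa)$ is surjective. Because $C_v$ is finite, $f$ is then a bijection. The first observation I would record is that $f$ preserves adjacency \emph{strictly}: if $x_i \adj_{\kappa} x_{i+1}$, then by Theorem~\ref{contThm} we have $f(x_i) \adjeq_{\kappa} f(x_{i+1})$, and since $f$ is injective with $x_i \neq x_{i+1}$ the equality $f(x_i) = f(x_{i+1})$ is impossible; hence $f(x_i) \adj_{\kappa} f(x_{i+1})$.

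Next I would exploit that every vertex of $C_v$ has exactly two neighbors. Fix $x_i$; its neighbors are $x_{i-1}$ and $x_{i+1}$ (indices taken $\bmod\, v$), and these are distinct because $v \ge 4$. By the previous step, $f(x_{i-1})$ and $f(x_{i+1})$ are both $\kappa$-adjacent to $f(x_i)$, and they are distinct by injectivity. Since $f(x_i)$ has exactly two neighbors in $C_v$, those neighbors must be precisely $f(x_{i-1})$ and $f(x_{i+1})$. Thus $f$ carries the two neighbors of each point onto the two neighbors of its image; this is the local rigidity that drives the rest of the proof.

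With that in hand, write $f(x_0) = x_s$. The neighbors $x_1, x_{v-1}$ of $x_0$ must map onto the neighbors $x_{s-1}, x_{s+1}$ of $x_s$, leaving two cases: either $f(x_1) = x_{s+1}$ (orientation preserving) or $f(x_1) = x_{s-1}$ (orientation reversing). I would then argue by induction on the index. In the first case, assuming $f(x_{i-1}) = x_{s+i-1}$ and $f(x_i) = x_{s+i}$, the point $f(x_{i+1})$ must be one of the two neighbors of $x_{s+i}$ and cannot equal $f(x_{i-1}) = x_{s+i-1}$ by injectivity, forcing $f(x_{i+1}) = x_{s+i+1}$. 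Hence $f(x_i) = x_{(s+i)\bmod v}$ for all $i$, i.e., $f = r_s$. The second case runs identically with the sign reversed, giving $f(x_i) = x_{(s-i)\bmod v}$, which is exactly the composition of the flip map with the rotation $r_s$. This exhausts the trichotomy.

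The argument is mostly bookkeeping; the step I expect to require the most care is the local rigidity claim that neighbors map \emph{onto} neighbors, since it rests on combining strict adjacency preservation with the degree-$2$ structure of $C_v$ and the hypothesis $v \ge 4$ to guarantee that both the two neighbors of $x_i$ and their images are distinct. Once that is secured, the two-case induction and the identification of the two outcomes with $r_s$ and with the flip composed with $r_s$ are routine modular arithmetic, and one checks in passing that the inductive determination of $f(x_{v-1})$ is consistent with $f(x_0)$ upon wrapping around.
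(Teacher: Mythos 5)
The paper states Theorem~\ref{Cn-maps} as a recalled result from \cite{bs20} and supplies no proof of its own, so there is no internal argument to compare against. Your proof is correct and is the standard one: surjectivity on a finite set forces bijectivity, injectivity upgrades $\adjeq$ to strict adjacency preservation, the degree-$2$ structure of $C_v$ then makes $f$ a graph automorphism, and the automorphisms of a cycle are exactly the rotations and the flip-composed-with-rotation maps.
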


\begin{thm}
{\rm \cite{Bx10}}
\label{antipodes}
Let $f: C_v \to \Z$ be $(\kappa,c_1)$-continuous. 
Assume $v$ is even. If $f$ is not surjective,
there exists $x_u \in C_v$ such that
$f(x_u) \adjeq_{c_1} f(x_{u + v/2})$.
\end{thm}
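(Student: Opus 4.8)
The plan is to reduce the statement to a discrete intermediate-value argument applied to the ``antipodal difference'' function. Writing $\bar{\imath} = (i + v/2) \bmod v$ for the antipode of the index $i$ (well-defined because $v$ is even), define $g : C_v \to \Z$ by $g(x_i) = f(x_i) - f(x_{\bar{\imath}})$. The desired conclusion $f(x_u) \adjeq_{c_1} f(x_{u+v/2})$ is exactly the assertion that $\mid g(x_u) \mid \le 1$ for some $u$, i.e.\ that $g$ attains a value in $\{-1,0,1\}$. So it suffices to prove the latter.

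Two structural facts about $g$ will drive the argument. First, $g$ is antipodally odd: since $\bar{\bar{\imath}} = i$ (here evenness of $v$ makes the antipodal map an involution), we get $g(x_{\bar{\imath}}) = f(x_{\bar{\imath}}) - f(x_i) = -g(x_i)$. Second, $g$ is ``$2$-Lipschitz'' along the cycle. Indeed, for consecutive indices we have both $x_i \adj_{\kappa} x_{i+1}$ and $x_{\bar{\imath}} \adj_{\kappa} x_{\overline{i+1}}$, so by Theorem~\ref{contThm} applied to the $(\kappa,c_1)$-continuous map $f$, each of $\mid f(x_i) - f(x_{i+1}) \mid$ and $\mid f(x_{\bar{\imath}}) - f(x_{\overline{i+1}}) \mid$ is at most $1$; writing $g(x_i)-g(x_{i+1}) = [f(x_i)-f(x_{i+1})] + [f(x_{\overline{i+1}}) - f(x_{\bar{\imath}})]$ and applying the triangle inequality gives $\mid g(x_i) - g(x_{i+1}) \mid \le 2$.

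With these in hand I would argue by contradiction. Suppose $g$ never takes a value in $\{-1,0,1\}$, so every value of $g$ is either $\ge 2$ or $\le -2$. Since $g(x_{\bar{0}}) = -g(x_0)$ and both avoid $\{-1,0,1\}$, exactly one of them is $\ge 2$ and the other $\le -2$; relabelling the start of the circular ordering if necessary, assume $g(x_0) \ge 2$, so that $g(x_{v/2}) = g(x_{\bar 0}) = -g(x_0) \le -2$. Now walk along the arc $x_0, x_1, \ldots, x_{v/2}$: the value of $g$ starts at $\ge 2$ and must end at $\le -2$. The crucial observation is that a single step changes $g$ by at most $2$, whereas crossing from the allowed positive region ($\ge 2$) to the allowed negative region ($\le -2$) requires a jump of width at least $4$. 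Concretely, if $g(x_i) \ge 2$ then $g(x_{i+1}) \ge g(x_i) - 2 \ge 0$, and since $g$ avoids $\{-1,0,1\}$ this forces $g(x_{i+1}) \ge 2$. By induction $g(x_{v/2}) \ge 2$, contradicting $g(x_{v/2}) \le -2$. Hence $g$ attains a value in $\{-1,0,1\}$, producing the required antipodal pair.

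I expect the only delicate point to be this final step, and stating it cleanly is the crux: because continuity of $f$ bounds the per-step change of $g$ only by $2$ rather than $1$, one cannot invoke a naive discrete intermediate value theorem, since $g$ could in principle skip over a single integer. The argument survives precisely because the forbidden band $\{-1,0,1\}$ together with the sign change forces a jump of size at least $4$, which exceeds the Lipschitz bound; the whole proof turns on this width-versus-step comparison. The hypothesis that $v$ is even is essential and is used twice above, while the non-surjectivity hypothesis is automatically satisfied in this formulation (a finite cycle cannot map onto $\Z$) and so plays no active role here; it is a harmless remnant of the cycle-target version of the result.
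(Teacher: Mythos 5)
Your proof is correct. Note that the paper itself gives no proof of this statement -- it is quoted from \cite{Bx10} as a known result -- so there is nothing in-paper to compare against; but your argument is a complete, self-contained proof in the standard discrete Borsuk--Ulam style (antipodal difference $g$, oddness from the involution $\bar{\bar{\imath}}=i$, the $2$-Lipschitz bound from Theorem~\ref{contThm}, and the observation that leaving the region $\{g\ge 2\}$ for $\{g\le -2\}$ would require a step of size at least $4$), which is essentially how this result is proved in the cited source. Your side remark is also accurate: with codomain $\Z$ the non-surjectivity hypothesis is vacuous and your proof never uses it; it matters only in the cycle-codomain version, which is how the theorem is actually invoked in Proposition~\ref{nonSurjectiveBound} (there $f(C_v)$ is first identified with a digital interval precisely because $f$ is not surjective).
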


\begin{prop}
\label{nonSurjectiveBound}
Let $f \in C(C_v,\kappa)$. Suppose $f$ is not
surjective and $v$ is even. Then there is an index~$j$
such that $d_{(C_v,\kappa)}(x_j,f(x_j)) \ge (v-2)/4$,
so $f$ is not a $D(v)$-map. 
\end{prop}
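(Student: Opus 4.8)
The plan is to reduce the statement to the digital Borsuk--Ulam-type result of Theorem~\ref{antipodes} by exploiting the non-surjectivity of $f$. Since $f$ is not surjective, there is a point $x_p \in C_v$ with $x_p \notin f(C_v)$; deleting $x_p$ turns the cycle into a digital arc $C_v \setminus \{x_p\}$ of $v-1$ points, which admits a $(\kappa,c_1)$-isomorphism $\iota$ onto the interval $[0,v-2]_{\Z} \subset \Z$. Because $f(C_v) \subseteq C_v \setminus \{x_p\}$, the composite $g = \iota \circ f : C_v \to \Z$ is well defined and, by Theorem~\ref{composition}, $(\kappa,c_1)$-continuous. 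This is the step that converts a cycle-to-cycle map into a $\Z$-valued map, which is precisely the setting required by Theorem~\ref{antipodes}.

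Next I would apply Theorem~\ref{antipodes} to $g$; since any map with finite domain is automatically not surjective onto the infinite set $\Z$, its hypothesis holds, and as $v$ is even there is an index $u$ with $g(x_u) \adjeq_{c_1} g(x_{u+v/2})$. Pulling this coincidence back through $\iota$ shows $f(x_u) \adjeq_{\kappa} f(x_{u+v/2})$ inside $C_v$: if $g(x_u)=g(x_{u+v/2})$ then $f(x_u)=f(x_{u+v/2})$ by injectivity of $\iota$, and if the two values differ by $1$ then $f(x_u) \adj_{\kappa} f(x_{u+v/2})$ because $\iota^{-1}$ carries consecutive integers to $\kappa$-adjacent arc points. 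In either case $d_{(C_v,\kappa)}(f(x_u), f(x_{u+v/2})) \le 1$.

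The heart of the argument is then a triangle-inequality estimate. The antipodal points $x_u$ and $x_{u+v/2}$ satisfy $d_{(C_v,\kappa)}(x_u, x_{u+v/2}) = v/2$, since both arcs of $C_v$ joining them have length $v/2$. Hence
\[
   \frac{v}{2} \le d_{(C_v,\kappa)}(x_u, f(x_u)) + 1 + d_{(C_v,\kappa)}(x_{u+v/2}, f(x_{u+v/2})),
\]
so the two displacements sum to at least $(v-2)/2$, and at least one of them is $\ge (v-2)/4$. Taking $j \in \{u,\ u+v/2\}$ to realize this maximum supplies the required index.

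Finally I would record that $(v-2)/4 > D(v)$ for both parities compatible with $v$ even: if $v-2 \equiv 0 \pmod 4$ then $(v-2)/4 = D(v)+1$, while if $v-2 \equiv 2 \pmod 4$ then $(v-2)/4 = D(v) + \tfrac12$; in each case $d_{(C_v,\kappa)}(x_j, f(x_j)) \ge (v-2)/4 > D(v)$, so $f$ is not a $D(v)$-map. I expect the only genuinely delicate point to be the first reduction: one must verify that non-surjectivity is exactly what licenses viewing $f$ as a map into $\Z$, and that $\iota$ transports the near-antipodal coincidence back to an honest adjacency in $C_v$ rather than merely a small distance measured in the deleted arc.
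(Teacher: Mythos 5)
Your proof is correct and follows essentially the same route as the paper's: reduce to Theorem~\ref{antipodes} via the isomorphism of $f(C_v)$ (equivalently, of the arc $C_v\setminus\{x_p\}$) with a digital interval, then apply the triangle inequality to the near-antipodal pair and check $(v-2)/4 > D(v)$. You merely make explicit two steps the paper leaves implicit, namely the composition with the isomorphism into $\Z$ and the transport of the $c_1$-adjacency back to a $\kappa$-adjacency in $C_v$.
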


\begin{proof}
\label{nonsurjLim}
Since $f$ is nonsurjective and continuous,
$f(C_v)$ is $(\kappa,c_1)$-isomorphic to a
digital interval. By Theorem~\ref{antipodes},
there exists $x_u \in C_v$ such that
$f(x_u) \adjeq_{\kappa} f(x_{u + v/2})$. 
Hence
\[ v/2 = d_{(C_v,\kappa)}(x_u, x_{u+v/2}) \le \]
\[ d_{(C_v,\kappa)}(x_u, f(x_u)) +
   d_{(C_v,\kappa)}(f(x_u), f(x_{u+v/2})) +
   d_{(C_v,\kappa)}(f(x_{u+v/2}), x_{u+v/2}) \le \]
\[ d_{(C_v,\kappa)}(x_u, f(x_u)) + 1 +
   d_{(C_v,\kappa)}(f(x_{u+v/2}), x_{u+v/2})
\]
or
\[ \frac{v-2}{2} \le
    d_{(C_v,\kappa)}(x_u, f(x_u)) +
   d_{(C_v,\kappa)}(f(x_{u+v/2}), x_{u+v/2})
\]
Thus
\[ \max \{ \, d_{(C_v,\kappa)}(x_u,f(x_u)), 
        d_{(C_v,\kappa)}(x_{u + v/2},f(x_{u + v/2})) \, \}
    \ge  \frac{v-2}{4} > D(v).
\]
Thus $f$ is not a $D(v)$-map.
\end{proof}

\begin{prop}
\label{rotationLim}
The rotation $r_d$ is an $m_d$-map, for
$m_d=\min\{ \, d,v-d \, \}$. Indeed, 
$d_{(C_v,\kappa)}(x_i,r_d(x_i)) = m_d$ for all indices~$i$.
\end{prop}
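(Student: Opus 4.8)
The plan is to reduce the claim to the elementary description of the shortest-path metric on a cycle, after which the $m_d$-map conclusion is immediate from Definition~\ref{n-map}. The single piece of real content is the metric formula; everything else is substitution.

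First I would record the shortest-path metric for $(C_v,\kappa)$. For distinct indices $i$ and $j$, the two arcs of $C_v$ joining $x_i$ to $x_j$ are exactly the two ``ways around'' the cycle: traversing indices in the increasing direction gives an arc of length $(j-i)\bmod v$, and the complementary arc has length $v-((j-i)\bmod v)=(i-j)\bmod v$. Since the graph underlying $(C_v,\kappa)$ is a cycle (from the circular indexing in the definition of a digital simple closed curve), these are the only two arcs joining the two points, so
\[
 d_{(C_v,\kappa)}(x_i,x_j)=\min\{\,(j-i)\bmod v,\ (i-j)\bmod v\,\}.
\]

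Next I would substitute $j=(i+d)\bmod v$. Because $0\le d<v$, we get $(j-i)\bmod v=d$ and, for $d\neq 0$, $(i-j)\bmod v=v-d$; the case $d=0$ is the identity and is trivial. Hence
\[
 d_{(C_v,\kappa)}(x_i,r_d(x_i))=\min\{\,d,\ v-d\,\}=m_d,
\]
and crucially this value is independent of $i$, reflecting that $r_d$ shifts every point by the same circular amount. This establishes the ``Indeed'' clause.

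Finally, the first assertion follows: since $d_{(C_v,\kappa)}(x_i,r_d(x_i))=m_d\le m_d$ for every index $i$, Definition~\ref{n-map} yields that $r_d$ is an $m_d$-map (and $m_d$ is in fact sharp, since the displacement equals $m_d$ exactly, so $r_d$ is not an $m'$-map for any $m'<m_d$). The only step needing care is the metric formula of the first paragraph, namely confirming that a cycle admits exactly the two described arcs between any two points and that their lengths are the two circular differences; I expect this to be the main (though routine) obstacle, requiring nothing beyond the circular-ordering structure and no deeper tool.
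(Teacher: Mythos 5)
Your proof is correct and is precisely the elementary argument the paper omits (its ``proof'' is just ``Elementary and left to the reader''): identify $d_{(C_v,\kappa)}(x_i,x_j)$ as the minimum of the two circular index differences, substitute $j=(i+d)\bmod v$, and apply Definition~\ref{n-map}. No gaps; the sharpness remark at the end is a correct bonus not claimed in the statement.
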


\begin{proof}
Elementary and left to the reader.
\end{proof}

\begin{thm}
Let $A = \{ \, x_i, x_j, x_k \, \} \subset C_v$
where $C_v$ is a union of unique shorter arcs determined 
by pairs of these points. 
(Note that $A$ is a freezing set for $(C_v,\kappa)$,
by Theorem~\ref{3ptsForCycles}.) Let 
$R_{ij}$ be the unique shorter arc from $x_i$ to $x_j$,
$R_{ik}$ be the unique shorter arc from $x_i$ to $x_k$,
and
$R_{jk}$ be the unique shorter arc from $x_j$ to $x_k$.
Let 
\[ 0 \le m \le \min \left \{ \, D(v), ~length(R_{ij})/2,
   ~length(R_{ik})/2, ~length(R_{jk})/2 \, \right \}.
\]
Then $(C_v,\kappa)$ is $(A,m,m)$-limited.
\end{thm}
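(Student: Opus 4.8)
The plan is to fix $f\in C(C_v,\kappa)$ with $f|_A$ an $m$-map and to prove that $d_{(C_v,\kappa)}(x,f(x))\le m$ for every $x\in C_v$. The case $m=0$ is exactly the assertion that $A$ is a freezing set, which is Theorem~\ref{3ptsForCycles} (see also Remark~\ref{elementaryProps}), so I assume $m\ge 1$. The engine of the argument is the classification in Theorem~\ref{Cn-maps}: $f$ is a rotation, a composition of a flip map with a rotation, or non-surjective. I would dispose of these three cases in turn, the non-surjective one being the crux.

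\textbf{Rotation case.} If $f=r_d$, then by Proposition~\ref{rotationLim} we have $d_{(C_v,\kappa)}(x,r_d(x))=m_d$ for every $x$, with $m_d=\min\{d,v-d\}$. Evaluating at any fixed $a\in A$ gives $m_d=d_{(C_v,\kappa)}(a,f(a))\le m$, since $f|_A$ is an $m$-map; hence $f$ is an $m_d$-map, and therefore an $m$-map.

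\textbf{Flip-rotation case.} Writing the map as $x_l\mapsto x_{(d-l)\bmod v}$, the displacement of $x_l$ is $d_{(C_v,\kappa)}(x_l,x_{d-l})=\min(w,v-w)$ with $w=(d-2l)\bmod v$. Thus $d_{(C_v,\kappa)}(x_l,f(x_l))\le m$ holds exactly when $w\in[0,m]_{\Z}\cup[v-m,v-1]_{\Z}$; since $m\le D(v)<v/2$, the indices $l$ satisfying this form two arcs, each of diameter at most $m$ and centered at the two antipodal near-fixed points of the reflection. All three indices $i,j,k$ lie in this set, so by the pigeonhole principle two of them, say $i$ and $j$, lie in the same arc, whence $d_{(C_v,\kappa)}(x_i,x_j)\le m$. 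But $x_i,x_j$ are the endpoints of the shorter arc $R_{ij}$, so $d_{(C_v,\kappa)}(x_i,x_j)=\mathrm{length}(R_{ij})\ge 2m$, and $2m>m$ for $m\ge 1$ is a contradiction. Hence $f$ is not a flip-rotation. Note that this argument needs no parity hypothesis on $v$.

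\textbf{Non-surjective case (the main obstacle).} Here I would show that $f|_A$ cannot be an $m$-map, so this case does not occur. For even $v$, Proposition~\ref{nonSurjectiveBound} and its proof (via Theorem~\ref{antipodes}) produce an antipodal pair $x_u,x_{u+v/2}$ whose images are $\kappa$-adjacent, one of which has displacement at least $(v-2)/4>D(v)\ge m$. The difficulty, exactly as anticipated, is to transfer this large displacement onto one of the three points of $A$ rather than onto an arbitrary point of $C_v$. The plan is to mimic the clustering argument of the flip-rotation case: a non-surjective $f$ factors through the arc $f(C_v)$, and unwrapping it as a $(\kappa,c_1)$-continuous map $h=\pi\circ f:C_v\to[0,v-2]_{\Z}$ lets one compare $d_{(C_v,\kappa)}(x,f(x))$ along an arc $R$ by an interval estimate in the spirit of Proposition~\ref{intervalLimitation}; because each $R$ has length at least $2m$ while the set on which the displacement is at most $m$ is confined to two short arcs about the fold axis, two of $x_i,x_j,x_k$ are again forced implausibly close. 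The two steps I expect to be delicate are (i) controlling the wraparound introduced by the omitted vertex of $C_v$ when passing between $d_{(C_v,\kappa)}$ and the $c_1$-metric on $[0,v-2]_{\Z}$, and (ii) the case of odd $v$, where Theorem~\ref{antipodes} does not apply directly and the confinement of the small-displacement set must be obtained by a separate, parity-free argument. Once the non-surjective and flip-rotation cases are excluded, $f$ must be a rotation, and the rotation case already yields the conclusion.
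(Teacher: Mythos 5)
Your case division via Theorem~\ref{Cn-maps} is the same as the paper's, and your rotation and flip--rotation cases are correct; indeed your flip--rotation argument is a more explicit version of the paper's, which merely asserts that orientation reversal forces $G(x_j)\notin N^*(x_j,m)$ once $G(x_i)\in N^*(x_i,m)$ --- exactly your observation that the small-displacement set of a reflection is confined to two arcs of diameter at most $m$ while any two points of $A$ are at distance at least $2m>m$.

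The genuine gap is the one you flag yourself: the non-surjective case, which you leave as a sketch with two admittedly unresolved steps. You should know that the paper's own proof does not close this case either: it disposes of it in one sentence, citing Proposition~\ref{nonSurjectiveBound} to conclude that ``every $m$-map in $C(C_v,\kappa)$ is a surjection.'' But that proposition (which moreover assumes $v$ even) bounds the displacement of \emph{some} point of $C_v$, not of a point of $A$; it excludes non-surjective maps that are $m$-maps on all of $C_v$, whereas what is needed is to exclude non-surjective maps whose restriction to $A$ is an $m$-map. Your instinct that transferring the large displacement onto $A$ is ``the main obstacle'' is correct, and in fact the obstacle appears insurmountable, because this case admits counterexamples to the statement itself. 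Take $v=16$ and $A=\{x_0,x_7,x_{12}\}$, so the three shorter arcs have lengths $7,5,4$ and cover $C_{16}$, $D(16)=3$, and $m=2\le\min\{3,\,7/2,\,5/2,\,4/2\}$. Define $f(x_l)=x_{14-l}$ for $0\le l\le 7$, $f(x_l)=x_l$ for $7\le l\le 12$, $f(x_{13})=f(x_{14})=x_{12}$, and $f(x_{15})=x_{13}$. One checks directly that consecutive images are equal or adjacent, so $f\in C(C_{16},\kappa)$; that $f(C_{16})=\{x_7,\ldots,x_{14}\}$ is a proper arc; and that $f|_A$ is a $2$-map (since $d(x_0,x_{14})=2$ and $x_7,x_{12}$ are fixed), yet $d(x_3,f(x_3))=d(x_3,x_{11})=8$, so $f$ is not a $2$-map. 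Hence no completion of your sketched unwrapping argument can succeed without strengthening the hypotheses so as to rule out such ``folds'' whose turning points sit near the points of $A$.
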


\begin{proof}
By our choice of $m$ and
Proposition~\ref{nonSurjectiveBound},
every $m$-map in $C(C_v,\kappa)$ is a surjection. 

By Proposition~\ref{rotationLim}, if $f \in C(C_v,\kappa)$
is a rotation and $f|_A$ is an $m$-map, then $f$ is
an $m$-map.

Let $\ell$ be the flip map of $C_v$,
$g = \ell \circ r_d: C_v \to C_v$, and
$g' = r_d \circ \ell: C_v \to C_v$.
By Theorem~\ref{composition}, $g,g' \in C(C_v,\kappa)$
and these functions are isomorphisms, hence
distance-preserving with respect to $d_{(C_v,\kappa)}$.
Let $G \in \{ \, g, \, g' \, \}$.

Suppose $G(x_i) \in N^*(x_i,m)$. By our choice of~$m$
and the fact that $G$ is orientation-reversing,
$G(x_j) \not \in N^*(x_j,m)$. Similarly, if
$G(x_j) \in N^*(x_j,m)$ or
$G(x_k) \in N^*(x_k,m)$, then
$G(x_i) \not \in N^*(x_i,m)$.
Therefore $G$ is not an $m$-map.

We conclude by Theorem~\ref{Cn-maps} that if
$f \in C(C_v,\kappa)$ such that $f|_A$ is an $m$-map,
then $f$ is a rotation and an $m$-map. Thus
$(C_v,\kappa)$ is $A(m,m)$-limited.
\end{proof}

\section{Limiting sets and retracts}
We show how if $X$ and $Y$ are finite connected
digital images such that $Y$ is a retract of $X$,
then a limiting set for $Y$ is a limiting set for
$X$, although not necessarily with the same
$(m,n)$ pair.

\begin{thm}
\label{extendFromRetract}
Let $\emptyset \ne A \subset Y \subset X$ such that
$(X,\kappa)$ and $(Y,\kappa)$ are finite and
connected and $Y$ is a $\kappa$-retract of $X$
by a retraction $r \in C(X,\kappa)$
that is an $\varepsilon$-map.
Let $h=H_{d_{(X,\kappa)}}(X,Y)$.
Suppose $Y$ is $(A, m, n)$-limited. 
Let $f \in C(X.\kappa)$ such that $f|_A$ is $m$-limited.
Then $f$ is an $(n + 2h + \varepsilon)$-map; hence
$(X,\kappa)$ is $(A, m, n + 2h + \varepsilon)$-limited.
\end{thm}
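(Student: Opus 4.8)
The plan is to transport the problem from $X$ down to $Y$ by means of the retraction $r$, apply there the hypothesis that $Y$ is $(A,m,n)$-limited, and then lift the resulting bound back up to $X$ with a triangle-inequality estimate. The one fact I will use repeatedly is that a continuous map does not increase shortest-path distance: if $\phi:(X,\kappa)\to(Z,\kappa)$ is continuous with connected domain and codomain, then $d_{(Z,\kappa)}(\phi(a),\phi(b))\le d_{(X,\kappa)}(a,b)$, since by Theorem~\ref{contThm} the $\phi$-image of a shortest $\kappa$-path from $a$ to $b$ is a walk whose consecutive terms are equal or $\kappa$-adjacent, hence a $\kappa$-path of no greater length.

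First I would define $g:Y\to Y$ by $g=r\circ f|_Y$, i.e.\ $g(y)=r(f(y))$; this lands in $Y$ because $r$ does, and it is $\kappa$-continuous by Theorem~\ref{composition}. Next I would check that $g|_A$ is an $m$-map \emph{on} $(Y,\kappa)$. Fix $a\in A\subset Y$. Since $f|_A$ is an $m$-map, $d_{(X,\kappa)}(a,f(a))\le m$; applying the distance-non-increasing property to $r$ together with $r(a)=a$ (as $r$ is a retraction and $a\in Y$) produces a $\kappa$-path in $Y$ from $a$ to $g(a)$ of length at most $m$, so $d_{(Y,\kappa)}(a,g(a))\le m$. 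Because $(Y,\kappa)$ is $(A,m,n)$-limited, it follows that $g$ is an $n$-map on $Y$: $d_{(Y,\kappa)}(y,g(y))\le n$ for every $y\in Y$, and hence $d_{(X,\kappa)}(y,g(y))\le n$ by Lemma~\ref{distLemma}.

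Finally I would lift this to $X$. Given $x\in X$, choose a Hausdorff witness $y_x\in Y$ with $d_{(X,\kappa)}(x,y_x)\le h$ (such $y_x$ exists by the definition of $h=H_{d_{(X,\kappa)}}(X,Y)$ and $Y\subset X$). Then
\[
 d_{(X,\kappa)}(x,f(x))\le d_{(X,\kappa)}(x,y_x)+d_{(X,\kappa)}(y_x,g(y_x))+d_{(X,\kappa)}(g(y_x),f(x)),
\]
where the first term is at most $h$ and the second at most $n$. For the third, write $g(y_x)=r(f(y_x))$ and split it as $d_{(X,\kappa)}(r(f(y_x)),f(y_x))+d_{(X,\kappa)}(f(y_x),f(x))$; the first summand is at most $\varepsilon$ because $r$ is an $\varepsilon$-map, and the second is at most $d_{(X,\kappa)}(y_x,x)\le h$ because $f$ is continuous. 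Adding these bounds yields $d_{(X,\kappa)}(x,f(x))\le h+n+\varepsilon+h=n+2h+\varepsilon$; since $x$ is arbitrary, $f$ is an $(n+2h+\varepsilon)$-map, and since $f$ was any map with $f|_A$ an $m$-map, $(X,\kappa)$ is $(A,m,n+2h+\varepsilon)$-limited.

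I expect the main subtlety to lie in choosing the intermediate points so as to obtain exactly the stated constant. Routing through $r(x)$ at both ends would only give $n+3\varepsilon$; the sharp $2h$ comes from routing through the nearest point $y_x$ of $Y$ at both ends while invoking the $\varepsilon$-map property of $r$ for precisely one step. One must also keep careful track of whether each distance is measured in $(X,\kappa)$ or in $(Y,\kappa)$, converting between them via Lemma~\ref{distLemma}.
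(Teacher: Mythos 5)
Your proposal is correct and follows essentially the same route as the paper: compose with $r$ to obtain $g=r\circ f|_Y\in C(Y,\kappa)$, verify $g|_A$ is an $m$-map by pushing a short path forward through $r$, invoke the $(A,m,n)$-limitedness of $(Y,\kappa)$, and finish with a four-term triangle inequality through a point $y\in Y$ within Hausdorff distance $h$ of $x$. The only (immaterial) difference is that your final chain runs $x\to y\to r(f(y))\to f(y)\to f(x)$, charging $\varepsilon$ to the step $r(f(y))\to f(y)$ and $h$ to the continuity of $f$, whereas the paper runs $x\to y\to r(f(y))\to r(f(x))\to f(x)$, charging $h$ to the continuity of $r\circ f$ and $\varepsilon$ to the step $r(f(x))\to f(x)$; both give $n+2h+\varepsilon$.
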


\begin{proof}
Let $f \in C(X,\kappa)$ be
such that $f|_A$ is an $m$-map. Then given $a \in A$,
there is a $\kappa$-path $P \subset X$ from $a$ to $f(a)$
of length at most $m$. Then
$r \circ f|_Y \in 
C(Y,\kappa)$ and $r(P)$ is a $\kappa$-path in $Y$
from $r(a)=a$ to $r(f(a))$ of length at most $m$.
Therefore, $r \circ f|_A$ is an $m$-map, so
$r \circ f|_Y$ is an $n$-map.

Let $x \in X$. 
There exists $y \in Y$ such that
$d(x,y) \le h$. Then
\[ d(x,f(x)) \le 
   d(x,y) + d(y, r(f(y))) +d(r(f(y)), r(f(x))) + d(r(f(x)), f(x))
\]
\[ \le h + n + d(y,x) + \varepsilon \le
   n + 2h + \varepsilon.
\] 
Thus, $f$ is an $(n + 2h + \varepsilon)$-map; thus
$(X,\kappa)$ is $(A, m, n + 2h + \varepsilon)$-limited.
\end{proof}

The bound $n +2h + \varepsilon$ in 
Theorem~\ref{extendFromRetract} is not generally
tight, as shown in the following.

\begin{exl}
\label{hairyRectangleExl}
Let $(X,\kappa)$ be a finited connected digital image.
Let $x_0 \in X$ and $A = Y = \{ \, x_0 \, \}$.
Let $h = H_{d(X,\kappa)}(X,Y)$. Clearly
$A$ is a freezing set for $(Y,\kappa)$, i.e., $(Y,\kappa)$
is $(A,0,0)$-limited. Since the function
$r: X \to X$ given by $r(x) = x_0$
is an an $h$-map and a retraction of $X$ to $Y$,
Theorem~\ref{extendFromRetract} implies
$(X, \kappa)$ is $(A,0,3h)$-limited.
However, $(X, \kappa)$ is $(A,0,h)$-limited, as
noted in Remark~\ref{elementaryProps}.
\end{exl}

\section{Cartesian products}
Elementary properties of limiting sets for Cartesian
products of digital images are discussed in this
section.

Given digital images or graphs $(X,\kappa)$
and $(Y,\lambda)$, the
{\em normal product adjacency} $NP(\kappa,\lambda)$ (also called the
{\em strong adjacency}~\cite{vLW}) generated by
$\kappa$ and $\lambda$ on the Cartesian product $X \times Y$ is defined
as follows.

\begin{defn}
\label{NP-def}
\rm{\cite{Berge,Sabidussi}}
Let $x, x' \in X$, $y, y' \in Y$.
Then $(x,y)$ and $(x',y')$ are $NP(\kappa,\lambda)$-adjacent in $X \times Y$
if and only if
\begin{itemize}
\item $x=x'$ and $y$ and $y'$ are $\lambda$-adjacent; or
\item $x$ and $x'$ are $\kappa$-adjacent and $y=y'$; or
\item $x$ and $x'$ are $\kappa$-adjacent and $y$ and $y'$ are $\lambda$-adjacent. \qed
\end{itemize}
\end{defn}

\begin{defn}
{\rm \cite{Sabidussi,BxNormal}}
Let $u,v \in \N$, $1 \le u \le v$.
Let $(X_i,\kappa_i)$ be digital images,
$i \in \{\,1,\ldots,v)$. Let $x_i,y_i \in X_i$,
$x = (x_1, \ldots, x_v)$, $y=(y_1,\ldots,y_v)$.
Then $x \adj y$ in the 
\emph{generalized normal product adjacency}
$NP_u(\kappa_1, \ldots, \kappa_v)$ if
for at least 1 and at most $u$ indices $i$,
$x_i \adj_{\kappa_i} y_i$, and for all other
indices $j$, $x_j=y_j$.
\end{defn}

Given a set of functions $f_i: X_i \to Y_i$ for
$1 \le i \le n$, the {\em product function} 
$\Pi_{i=1}^n f_i: \Pi_{i=1}^n X_i \to \Pi_{i=1}^n Y_i$
is the function
\[ \left ( \Pi_{i=1}^n f_i \right ) (x_1, \ldots, x_n) =
   (f_1(x_1), \ldots, f_n(x_n)), \mbox{ where }
   x_i \in X_i.
\]
\begin{thm}
\label{prod-cont}
{\rm \cite{BxNormal}}
Let $f_i: (X_i, \kappa_i) \to (Y_i, \lambda_i)$, $1 \le i \leq v$.
Then the product map
\[ \Pi_{i=1}^v f_i: (\Pi_{i=1}^v X_i, NP_v(\kappa_1, \ldots, \kappa_v)) \to (\Pi_{i=1}^v Y_i, NP_v(\lambda_1, \ldots, \lambda_v)) \]
is continuous if and only if each $f_i$ is continuous.
\end{thm}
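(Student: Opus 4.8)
The plan is to reduce both implications to the pointwise adjacency criterion of Theorem~\ref{contThm}, which says a map is continuous exactly when it carries each adjacent pair to an adjacent-or-equal pair. Write $F = \Pi_{i=1}^v f_i$, and recall that two points $x=(x_1,\ldots,x_v)$ and $y=(y_1,\ldots,y_v)$ satisfy $x \adj y$ in $NP_v(\kappa_1,\ldots,\kappa_v)$ precisely when the set $S = \{ \, i \mid x_i \adj_{\kappa_i} y_i \, \}$ is nonempty and $x_j = y_j$ for every $j \notin S$. This reformulation is what makes the coordinate-by-coordinate argument transparent.

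For the forward direction, I would assume each $f_i$ is continuous and suppose $x \adj y$ in $NP_v(\kappa_1,\ldots,\kappa_v)$, with $S$ as above. For $i \in S$, Theorem~\ref{contThm} gives $f_i(x_i) \adjeq_{\lambda_i} f_i(y_i)$; for $j \notin S$ we have $f_j(x_j) = f_j(y_j)$. Hence in each coordinate $F(x)$ and $F(y)$ are equal or $\lambda_i$-adjacent. Letting $T = \{ \, i \mid f_i(x_i) \adj_{\lambda_i} f_i(y_i) \, \} \subseteq S$, either $T = \emptyset$, whence $F(x)=F(y)$, or $T \neq \emptyset$, whence $F(x) \adj F(y)$ in $NP_v(\lambda_1,\ldots,\lambda_v)$. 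In both cases $F(x) \adjeq F(y)$, so $F$ is continuous by Theorem~\ref{contThm}.

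For the converse, I would assume $F$ is continuous, fix an index $i$, and verify the criterion of Theorem~\ref{contThm} for $f_i$. Given $x_i \adj_{\kappa_i} y_i$ in $X_i$, choose a base point $a_j \in X_j$ for each $j \neq i$ (possible since each factor is nonempty) and form $x$ and $y$ agreeing with these base points off the $i$-th coordinate and carrying $x_i$, respectively $y_i$, in the $i$-th coordinate. Then $x \adj y$ in $NP_v(\kappa_1,\ldots,\kappa_v)$, since they differ in exactly the single coordinate $i$, so continuity of $F$ gives $F(x) \adjeq F(y)$. But $F(x)$ and $F(y)$ agree in every coordinate except possibly the $i$-th, so any difference must occur there; hence $f_i(x_i) \adjeq_{\lambda_i} f_i(y_i)$, and $f_i$ is continuous.

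The argument is essentially bookkeeping over coordinates, so I expect no deep obstacle; the one point demanding care is the bidirectional handling of the adjacent-or-equal relation $\adjeq$. In the forward direction some adjacent input coordinates may collapse to equal output coordinates, shrinking $S$ to $T$ and even possibly to $\emptyset$, so one must allow the equality case explicitly. In the converse one must observe that a single differing input coordinate forces the adjacency to land in coordinate $i$ rather than elsewhere. The nonemptiness of each factor $X_j$, needed to select the base points in the converse, is implicit in the notion of a digital image.
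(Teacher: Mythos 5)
Your argument is correct: both directions reduce cleanly to the adjacency criterion of Theorem~\ref{contThm}, and you handle the two delicate points (adjacent coordinates collapsing to equal ones in the forward direction, and the single-coordinate localization of adjacency in the converse) properly. Note that the paper itself states this theorem as a quoted result from~\cite{BxNormal} and gives no proof, so there is nothing to compare against; your coordinatewise verification is the standard argument and serves as a valid self-contained proof.
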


\begin{thm}
\label{prodOnMaps}
Let $X = \prod_{i=1}^v X_i$. Let 
$\kappa=NP_v(\kappa_1, \ldots, \kappa_v)$.
Let $\emptyset \neq A \subset X$. Let
$A_i = p_i(A)$ for each index~$i$. Suppose 
$(X,\kappa)$ is $(A,m,n)$-limited.
Then for each index~$i$, $(X_i,\kappa_i)$ is
$(A_i,m,n)$-limited.
\end{thm}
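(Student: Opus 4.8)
The plan is to prove directly from Definition~\ref{n-map} that each factor is $(A_i,m,n)$-limited: given an arbitrary $g \in C(X_i,\kappa_i)$ whose restriction $g|_{A_i}$ is an $m$-map, I must show $g$ is an $n$-map. The natural device is to lift $g$ to the product by padding it with identities, namely to set $f = \prod_{j=1}^v f_j$ where $f_i = g$ and $f_j = \id_{X_j}$ for $j \ne i$. Since $A \ne \emptyset$ forces $X = \prod_j X_j \ne \emptyset$, each factor $X_j$ is nonempty; and since $(X,\kappa)$ is connected (as required by Definition~\ref{n-map}), each projection $p_j$ is $(\kappa,\kappa_j)$-continuous by Theorem~\ref{contThm}, so $X_j = p_j(X)$ is connected and the metric $d_{(X_j,\kappa_j)}$ is well-defined. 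By Theorem~\ref{prod-cont}, $f \in C(X,\kappa)$. The idea is then to feed $f$ into the hypothesis that $(X,\kappa)$ is $(A,m,n)$-limited and read off the conclusion on the $i$-th coordinate.

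Before carrying this out I would record two facts about the shortest-path metric of the normal product $\kappa = NP_v(\kappa_1,\ldots,\kappa_v)$, which are the technical heart of the argument. First, for all $x,y \in X$, $d_{(X,\kappa)}(x,y) \le \max_{j} d_{(X_j,\kappa_j)}(x_j,y_j)$: writing $M$ for the right-hand side, one takes shortest $\kappa_j$-paths in each factor, pads each (by repeating its endpoint) to common length $M$, and assembles them coordinatewise into a $\kappa$-walk of length $M$ from $x$ to $y$ — the key point being that under $NP_v$ all coordinates may advance simultaneously in a single adjacency step, so that at each stage the assembled points are equal or $\kappa$-adjacent — whence an arc of length $\le M$ is extracted. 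Second, for each index $j$, $d_{(X_j,\kappa_j)}(p_j(x),p_j(y)) \le d_{(X,\kappa)}(x,y)$, since applying the continuous map $p_j$ to a shortest $\kappa$-arc yields a $\kappa_j$-walk of the same length, which collapses (after deleting repetitions) to a $\kappa_j$-arc no longer than the original.

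With these in hand the argument is short. The map $f|_A$ is an $m$-map: for $a \in A$, $f(a)$ differs from $a$ only in coordinate $i$, so by the first fact $d_{(X,\kappa)}(a,f(a)) \le \max_j d_{(X_j,\kappa_j)}(a_j,f_j(a_j)) = d_{(X_i,\kappa_i)}(a_i,g(a_i))$, and since $a_i = p_i(a) \in A_i$ and $g|_{A_i}$ is an $m$-map, this is $\le m$. Invoking that $(X,\kappa)$ is $(A,m,n)$-limited, $f$ is therefore an $n$-map. Finally I project back: given any $x_i \in X_i$, choose (using nonemptiness of each factor) some $x \in X$ with $p_i(x) = x_i$; then $p_i(f(x)) = g(x_i)$, so by the second fact $d_{(X_i,\kappa_i)}(x_i,g(x_i)) = d_{(X_i,\kappa_i)}(p_i(x),p_i(f(x))) \le d_{(X,\kappa)}(x,f(x)) \le n$. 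Hence $g$ is an $n$-map, so $(X_i,\kappa_i)$ is $(A_i,m,n)$-limited.

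The only genuine obstacle is the first metric fact — that the product distance is bounded by the maximum of the coordinate distances, which relies on the simultaneous-advance feature of $NP_v$; the second fact and the continuity and connectivity bookkeeping are routine. If one preferred to avoid spelling out the padding construction, one could instead cite a known computation of the $NP_v$ shortest-path metric, but since the paper does not state it, I would include the short argument above.
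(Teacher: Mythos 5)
Your proposal is correct and follows essentially the same route as the paper: the paper likewise assembles coordinatewise paths into an $NP_v$-path of length $\le m$ to show the product map restricted to $A$ is an $m$-map, invokes the hypothesis on $(X,\kappa)$, and projects an $n$-path back to the factor. The only cosmetic difference is that you pad the lifted map with identities on the other factors, whereas the paper carries arbitrary $f_j$ on all factors simultaneously; your explicit statement of the two metric inequalities for $NP_v$ is a welcome clarification of steps the paper leaves implicit.
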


\begin{proof}
Suppose $f_i \in C(X_i, \kappa_i)$ such that
$f_i|_{A_i}$ is an $m$-map. Then for each $x_i \in A_i$
there is a path $g_i: [0,m]_{\Z} \to X_i$ 
from $x_i$ to $f_i(x_i)$.
Consider the function
$G: [0,m]_{\Z} \to X$ given by
\[ G(t) = (g_1(t), \ldots, g_v(t)).
\]
Clearly, $G$ is $(c_1,\kappa)$-continuous.
Also, by Theorem~\ref{prod-cont},
\[ F = \prod_{i=1}^v f_i \in C(X,\kappa).
\]
Thus, $G$ is a $\kappa$-path
of length at most $m$ from
$(x_1,\ldots,x_v) \in A$ to
$F(x_1, \ldots, x_v)$. Hence
$F|_A$ is an $m$-map.
Therefore, $F$ is an $n$-map.
Therefore, given
$x=(x_1,\ldots,x_v) \in X$, there is a path 
$H: [0,n]_{\Z} \to X$ from $x$ to $F(x)$. Then
$p_i \circ H$ is a $\kappa_i$-path of length at 
most $n$ in $X_i$ from $x_i$ to $f_i(x_i)$. 
The assertion follows.
\end{proof}

\section{Infinite $X$}
\label{infX}
In this section, we give elementary properties of
limiting sets for infinite digital images.

\begin{prop}
Let $X$ be an infinite subset of $\Z^n$. Then
for $1 \le u \le v$, any $m,n \in \N^*$, and any 
finite $A \subset X$, $(X,c_u)$ is not $(A,m,n)$ limited.
\end{prop}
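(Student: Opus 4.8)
The plan is to negate the definition of $(A,m,n)$-limitedness directly: it suffices to produce a single $f \in C(X,c_u)$ with $f|_A = \id_A$ that moves some point of $X$ by more than $n$. Indeed, if $f|_A = \id_A$ then $f|_A$ is a $0$-map and hence an $m$-map for every $m \in \N^*$, while $d_{(X,c_u)}(w,f(w)) > n$ for some $w$ shows $f$ is not an $n$-map; so one such $f$ simultaneously witnesses failure of $(A,m,n)$-limitedness for every pair $(m,n)$ at once.

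First I would record that far-away points are available. Fixing a basepoint $a_0 \in A$, every ball $N^*(a_0,r)$ is finite because $X \subset \Z^n$, so from $X = \bigcup_{r \ge 0} N^*(a_0,r)$ and the infinitude of $X$ we get that $\rho(x) = d_{(X,c_u)}(x,a_0)$ is unbounded; in particular $diam(X,c_u) = \infty$ and there is $w \in X$ with $\rho(w) > \max_{a \in A}\rho(a) + n$. I would also note that $\rho$ is $1$-Lipschitz on the graph, since $c_u$-adjacent vertices have $\rho$-values differing by at most $1$; hence by Theorem~\ref{contThm} the map $\rho \colon (X,c_u) \to (\Z,c_1)$ is continuous, as is any truncation $\min(\rho,R)$.

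Next, the construction. Choosing a shortest $c_u$-path (a geodesic) $\pi \colon [0,L]_{\Z} \to X$ from $a_0$ to $w$, the composite $f = \pi \circ \min(\rho,L)$ lies in $C(X,c_u)$ by Theorem~\ref{composition} and sends $w$ to a point at $\rho$-distance far less than $\rho(w)$, so $d_{(X,c_u)}(w,f(w)) > n$. This already settles the case $A = \{a_0\}$, and more generally the case in which $A$ lies on a single geodesic through $a_0$, since in those cases $f$ fixes $A$.

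The step I expect to be the main obstacle is fixing the \emph{whole} finite set $A$, not merely the basepoint, while still dragging a distant point inward: the radial collapse above fixes only $a_0$, and one cannot in general splice ``identity near $A$'' to ``collapse far from $A$,'' because continuity fails at the interface (equivalently, a ball of $(X,c_u)$ need not be a $c_u$-retract). The way I would resolve this is to replace the ball by a finite connected subimage $Y$ with $A \subset Y \subsetneq X$ that is a $c_u$-retract of $X$ via a retraction $r$ moving some point by more than $n$; then $r \in C(X,c_u)$ fixes $Y \supseteq A$ and is not an $n$-map, which is exactly what is needed. Producing such a $Y$ and $r$ for an arbitrary infinite connected $X \subset \Z^n$ — for instance by collapsing an unbounded ``end'' of $X$ onto a finite gateway while leaving a large ball about $A$ fixed — is the technical heart of the argument and the part I expect to require the most care.
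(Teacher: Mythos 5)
Your reduction is correct---a single $f \in C(X,c_u)$ with $f|_A = \id_A$ and $d_{(X,c_u)}(x,f(x)) > n$ for some $x$ defeats $(A,m,n)$-limitedness for every pair $(m,n)$ at once---and you have correctly located the crux in constructing such an $f$. But you stop exactly there: the proposal ends by declaring the construction of the retraction ``the technical heart'' and ``the part I expect to require the most care'' without producing it, so the proof is not complete. There is also a slip in your warm-up construction: if $\pi$ is a geodesic from $a_0$ to $w$ of length $L = \rho(w)$, then $f = \pi \circ \min(\rho,L)$ satisfies $f(w) = \pi(L) = w$, so $f$ does not move $w$ at all, contrary to your claim that it ``sends $w$ to a point at $\rho$-distance far less than $\rho(w)$.'' The construction is salvageable---a point $x$ with $\rho(x) > L + n$ is sent to $w$ and hence moved by more than $n$---but as written the witness is wrong.

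The paper fills the gap you flag with an explicit global formula rather than any splicing. Choose $k$ with $A \subset Y = [-k,k]_{\Z}^v$ and set $r_v(x_1,\ldots,x_v) = (r_1(x_1),\ldots,r_1(x_v))$, where $r_1(t) = \max(-k,\min(k,t))$. Each coordinate map is $1$-Lipschitz, so by Theorem~\ref{contThm} $r_v$ is $c_u$-continuous for every $u$; it is the identity on $Y \supset A$, hence $r_v|_A$ is a $0$-map and thus an $m$-map; and since $Y$ is finite, $X$ is infinite, and balls in the path metric are finite, some $x \in X$ lies at $d_{(X,c_u)}$-distance greater than $n$ from every point of $Y$, so $r_v$ is not an $n$-map. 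The ``interface continuity'' difficulty you anticipate never arises, because the clamp is Lipschitz everywhere, not pieced together. That said, your instinct that a truly arbitrary infinite connected $X \subsetneq \Z^v$ is delicate is not misplaced: $r_v$ is a retraction of $\Z^v$ onto the cube, and for a proper subset $X$ the point $r_v(x)$ need not lie in $X$---a point the paper's proof does not address. Still, the coordinatewise clamp is the intended argument, and some concrete construction of this kind had to be supplied for your proof to stand.
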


\begin{proof}
Given any finite subset $A$ of $\Z^n$, there exists a cube
$Y = [-k,k]_{\Z}^v$ such that $A \subset Y$. 

Let $r_1: \Z \to \Z$ be the $c_1$-retraction
\[ r_1(x) = \left \{ \begin{array}{ll}
    -k & \mbox{if } x \le -k; \\
     x & \mbox{if } -k \le x \le k; \\
     k & \mbox{if } k \le x.
   \end{array} \right .
\]
Then the function $r_v: X \to Y$ given by
\[ r_v(x_1,\ldots, x_v) = (r_1(x_1), \ldots, r_1(x_v))
\]
is easily seen to be a $c_u$-retraction of $\Z^n$ to $Y$.
Therefore $r|_A$ is a $0$-map, hence an $m$-map.

However, since $X$ is infinite and $Y$ is finite,
given any $m \in \N^*$, there exists $x \in X$ such
that no member of $Y$ is within $m$ of $x$ in the
$d_{(X,c_u)}$ metric. The assertion follows.
\end{proof}

\begin{exl}
Let $m \in \N$. The set 
\[ m\Z = \{ \, x \in \Z \mid x=mk \mbox{ for some }
           k \in \Z \, \}
\]
is an $(m,m)$-limiting set for $(\Z,c_1)$.
\end{exl}

\begin{proof}
If $m=1$ then the assertion is trivial. Thus we
assume $m>1$.

Let $f \in C(\Z,c_1)$ such that $f|_{m\Z}$ is
an $m$-map. Let $z \in \Z \setminus m\Z$. For some
$k \in Z$ and $q \in [1,m-1]_{\Z}$,
\[ z = mk + q.
\]
We must show $\mid f(z) - z \mid ~ \le m$. Suppose this
is false.

If $k \ge 0$ we proceed as follows.
\begin{itemize}
    \item If $f(z) - z > m$ then
          \[ f(z) > z + m = m(k+1) + q,
          \]
           and, since $f|_{m\Z}$ is an $m$-map,
          $f(mk) \le m(k+1)$, so
         \[ f(z) - f(mk) > m(k+1) + q - m(k+1)  = q.
         \]
         Thus $f([mk,z]_{\Z})$ is a $c_1$- path in $\Z$
         of length greater than $q$. But $[mk,z]_{\Z}$ is a $c_1$-path in $\Z$ of length $q$.
         This is impossible.
    \item Similarly, if $f(z) - z < -m$, we obtain
          a contradiction.
\end{itemize}

Similarly, if $k < 0$, we must obtain a contradiction.
It follows that $f$ is an $m$-map. Hence $m\Z$
is an $(m,m)$-limiting set for $(\Z,c_1)$.
\end{proof}

\section{Further remarks}
The fixed point theory for digital images has led us
to the study of limiting sets, in the sense that
the notion of an $(A,m,n)$-limited digital image
$(X,\kappa)$ generalizes the notion of $A$ being a
freezing set for $(X,\kappa)$. If $m$ and $n$ are
small relative to $diam(X,\kappa)$, we may expect
$f \in C(X,\kappa)$, such that $f|_A$ is an $m$-map,
to move no point of $X$ by very much (i.e., by 
more than $n$) and therefore, $f(X)$ might be
expected to resemble $X$ (although such a conclusion will
be subjective and may subjectively admit of exceptions).

We have explored several basic properties of
limiting sets, including some of their relationships
with retractions, Cartesian products, and infinite
cardinality. We have seen that often, 
if $A$ is a freezing set for $(X,\kappa)$ (i.e.,
$(X,\kappa)$ is $(A,0,0)$-limited),
then $(X,\kappa)$ is $(A,m,n)$-limited for small $m,n$ 
such that $(m,n) \neq (0,0)$.

We are grateful to a
reviewer for corrections
and suggestions.

\end{document}